\newcommand\xrefcomment[1]{}		
\newtheorem{lem}{Lemma}[section]
\newtheorem{cor}[lem]{Corollary}
\newtheorem{prop}[lem]{Proposition}
\newtheorem{thm}[lem]{Theorem}
\theoremstyle{definition}
\newtheorem{conj}[lem]{Conjecture}
\newtheorem{problem}{Problem}
\newtheorem{conjecture}[problem]{Conjecture}
\numberwithin{equation}{section}
\numberwithin{table}{section}
\numberwithin{figure}{section}
\newcommand\vstrut[1]{\rule{0ex}{#1}}
\renewcommand\mod{\, \operatorname{mod}\, }
\renewcommand{\phi}{\varphi} 
\renewcommand{\epsilon}{\varepsilon}
\newcommand\codim{\operatorname{codim}}
\newcommand\cA{\mathscr{A}}		
\newcommand\cB{\mathcal{B}}
\newcommand\cBo{{\cB^\circ}}
\renewcommand\cH{\mathcal{H}}	
\renewcommand\cL{\mathscr{L}}	
\newcommand\cT{\mathcal{T}}
\newcommand\cU{\mathcal{U}}		
\newcommand\tcU{\widetilde\cU}
\newcommand\bbR{\mathbb{R}}
\newcommand\bbZ{\mathbb{Z}}
\newcommand\pB{\mathbb B}	
\newcommand\pN{\mathbb N}	
\newcommand\pP{\mathbb P}	
\newcommand\pQ{\mathbb Q}	
\newcommand\pR{\mathbb R}	
\newcommand\bz{\mathbf z}
\newcommand\lcm{\operatorname{lcm}}
\newcommand\M{\mathbf{M}}
\newcommand\Kot{Kot\v{e}\v{s}ovec}
\newcommand\cube{[0,1]^{2q}}
\newcommand\ocube{(0,1)^{2q}}
\newcommand\barn{{\bar n}}
\newcommand\hatc{{\hat c}}
\newcommand\hatd{{\hat d}}
\begin{document}

\renewcommand\langle{(}
\renewcommand\rangle{)}

\newcommand\CQhone{III.3.2\xrefcomment{C:Qh1}}		

\newcommand\Phvdiag{III.3.1\xrefcomment{P:hvdiag}}	
\newcommand\Lcodimzot{III.3.4\xrefcomment{L:codim012}}	

\newcommand\TtwoQhk{III.4.1\xrefcomment{T:2Qhk}}	
\newcommand\TthreeQhk{III.4.2\xrefcomment{T:3Qhk}}	
\newcommand\Tbishopsperiod{VI.1.1\xrefcomment{T:bishopsperiod}}  

\newcommand\Tbpqueensgammatwo{III.3.1\xrefcomment{Tb:pqueensgamma2}}	
\newcommand\Tbpqueensgammathree{III.3.2\xrefcomment{Tb:pqueensgamma3}}	
\newcommand\Tbpqueenspolystwo{III.4.1\xrefcomment{Tb:pqueenspolys2}} 
\newcommand\Tbpqueenspolysthree{III.4.2\xrefcomment{Tb:pqueenspolys3}} 
\newcommand\Tbtypesthree{III.4.3\xrefcomment{Tb:types3}}	

\newcommand\fourmove{IV.6\xrefcomment{IV.sec:4move}}
\newcommand\semiqueen{IV.5.8\xrefcomment{IV.conj:semiqueen}}
\newcommand\onemove{IV.3.1\xrefcomment{IV.p:1move}}
\newcommand\trident{IV.5.10\xrefcomment{IV.conj:trident}}
\newcommand\denomtwomove{IV.4.2\xrefcomment{IV.P:denom2move}}
\newcommand\denomone{IV.2.3\xrefcomment{IV.prop:denom1}}



\title{A $q$-Queens Problem \\
V.  Some of Our Favorite Pieces:  \\Queens, Bishops, Rooks, and Nightriders}

\author{Seth Chaiken}
\address{Computer Science Department\\ The University at Albany (SUNY)\\ Albany, NY 12222, U.S.A.}
\email{\tt sdc@cs.albany.edu}

\author{Christopher R.\ H.\ Hanusa}
\address{Department of Mathematics \\ Queens College (CUNY) \\ 65-30 Kissena Blvd. \\ Queens, NY 11367-1597, U.S.A.}
\email{\tt chanusa@qc.cuny.edu}

\author{Thomas Zaslavsky}
\address{Department of Mathematical Sciences\\ Binghamton University (SUNY)\\ Binghamton, NY 13902-6000, U.S.A.}
\email{\tt zaslav@math.binghamton.edu}

\begin{abstract}
Parts~I--IV showed that the number of ways to place $q$ nonattacking queens or similar chess pieces on an $n\times n$ chessboard is a quasipolynomial function of $n$ whose coefficients are essentially polynomials in $q$.  For partial queens, which have a subset of the queen's moves, we proved complete formulas for these counting quasipolynomials for small numbers of pieces and other formulas for high-order coefficients of the general counting quasipolynomials.  
We found some upper and lower bounds for the periods of those quasipolynomials by calculating explicit denominators of vertices of the inside-out polytope.

Here we discover more about the counting quasipolynomials for partial queens, both familiar and strange, and the nightrider and its subpieces, and we compare our results to the empirical formulas found by \Kot.  We prove some of \Kot's formulas and conjectures about the quasipolynomials and their high-order coefficients, and in some instances go beyond them.
\end{abstract}

\subjclass[2010]{Primary 05A15; Secondary 00A08, 52C07, 52C35.}

\keywords{Nonattacking chess pieces, fairy chess pieces, Ehrhart theory, inside-out polytope, arrangement of hyperplanes}

\thanks{The first author gratefully acknowledges support from PSC-CUNY Research Awards PSCOOC-40-124, PSCREG-41-303, TRADA-42-115, TRADA-43-127, and TRADA-44-168.  The latter two authors thank the very hospitable Isaac Newton Institute for facilitating their work on this project.}

\maketitle
\pagestyle{myheadings}
\markright{\textsc{A $q$-Queens Problem. V. Favorite Pieces}}\markleft{\textsc{Chaiken, Hanusa, and Zaslavsky}}


\section{Introduction}\label{intro}

We apply our study of nonattacking chess pieces from Parts I--IV \cite{QQs1,QQs2,QQs3,QQs4}  to standard chess pieces---the queen, bishop, and rook (even the rook is interesting)---and our favorite fairy chess piece, the nightrider, which moves any distance in the directions of a knight's move.  And to the  ``partial queens'' and ``partial nightriders''  that, like the bishop and rook, have a subset of the queen's or the nightrider's moves.  

The classic $n$-Queens Problem asks for the number of nonattacking configurations of $n$ queens on an $n\times n$\label{d:n} chessboard.  It has no practical general solution; the only known general formulas are the computationally impractical ones in Part~II \cite{QQs2} and in \cite{Pratt}.
We began our study by separating $n$, the board size, from $q$,\label{d:q} the number of queens, arriving at the \emph{$q$-Queens Problem}: In how many ways can $q$ queens be placed on an $n\times n$ chessboard so that no queen attacks another?  
In Parts I--IV we developed a general geometrical theory of this number, $u_\pQ(q;n)$, as a function of integers $n$ and $q>0$, for all chess and fairy chess pieces that, like the queen, bishop, rook, and nightrider, have moves of unlimited length.  Such pieces are called \emph{riders} by the fairy chess community.\footnote{Fairy chess is chess with unusual pieces, rules, or boards.}  
The problem, given a rider $\pP$\label{d:P}, is this:

\begin{problem}\label{Pr:formula}
Find an explicit formula for $u_\pP(q;n)$\label{d:indistattacks}, the number of nonattacking configurations of $q$ unlabelled pieces $\pP$ on an $n \times n$ board.
\end{problem}

Finding a single comprehensive formula for $u_\pP(q;n)$ for a single piece $\pP$, for all $q$ and $n$, for any piece other than the rook and bishop---and especially for the queen---looks impossible.  
Nevertheless, in Part I we established that $u_\pP(q;n)$ is a quasipolynomial function of $n$ of degree $2q$, that is, it is given by a cyclically repeating sequence of polynomials (the quasipolynomial's \emph{constituents}); so it can be written as  
\begin{equation*}
u_\pP(q;n) = \gamma_0(q) n^{2q} + \gamma_1(q) n^{2q-1} + \gamma_2(q) n^{2q-2} + \cdots + \gamma_{2q}(q) n^0
\label{d:gamma}
\end{equation*}
where the coefficients $\gamma_i(q)$\label{d:coeff} vary cyclically, depending on $n$ modulo a number $p$,\label{d:p} the \emph{period}, but not on $n$ itself.  
The period is a fundamental number; it tells us how much data is needed to rigorously determine the complete quasipolynomial, since $2qp$ values of the counting function suffice.  
Furthermore, in each residue class of $n$, $q!\gamma_i(q)$ is a polynomial function of $q$ of degree $2i$.  Still further, substituting $n=-1$ gives the number of combinatorially distinct types of nonattacking configurations, as explained in Section~\ref{essentials}.  

In this part we present our current state of knowledge about naturally interesting pieces.  The goal is to prove exact formulas for a fixed number $q$ of each piece, where $q$ is (unavoidably) small, and along the way to see how many complete formulas we can prove, for all $q$, for coefficients of high powers of $n$.  
Our contributions include rigorous partial answers to Problem~\ref{Pr:formula} for the queen, bishop, nightrider, and pieces with subsets of their moves.  Some of our rigorous answers, especially for partial nightriders, are new; others were previously known heuristically (by \Kot\ and others; see \cite{ChMath}) but only a few were proved.

Three pieces were previously solved.  The rook and semirook are elementary.  Arshon and \Kot\ proved a complete formula for the bishop, but it is not a quasipolynomial and cannot have $n=-1$ substituted to get the number of combinatorial types; thus we consider the bishops problem only partially solved.

How large a number $q$ and how many coefficients we can handle depend on the piece.  For the rook, naturally, we get well-known formulas for all $q$, although our viewpoint does lead to an apparently new property of Stirling numbers (Proposition~\ref{P:stirling}).  At the other extreme we have only very partially solved three nightriders, for which the formula was previously found heuristically, without proof, by \Kot.  No formula for four nightriders has even been guessed; it is conceivable that it could be obtained by a painstaking analysis using our method, but that is not probable, judging by the 11-digit denominator (see Table~\ref{Tb:nightriders}; the denominator is a multiple of the period, conjecturally equal to it).  We offer no hope for five.

Although in Ehrhart theory periods often are less than denominators, we observe equality in our solved chess problems.  We propose:

\begin{conjecture}\label{Cj:p=D}
For every rider $\pP$ and every $q\geq1$, the period of the counting quasipolynomial $q!u_\pP(q;n)$ equals the denominator of the inside-out polytope for $q$ copies of $\pP$.
\end{conjecture}

We remind the reader that \Kot's book \cite{ChMath} is replete with formulas, mostly generated by himself, for all kinds of nonattacking chess problems.  Properties of \Kot's bishops and queens formulas inspired many of our detailed results.  

Anyone who wants to know the actual number of nonattacking placements of $q$ of our four principal pieces will find answers in the Online Encyclopedia of Integer Sequences \cite{OEIS}.  Table~\ref{Tb:oeis} gives sequence numbers in the OEIS.  The first row is the sequence of square numbers.  After that it gets interesting.  
\begin{table}[htbp]
\begin{center}
\begin{tabular}{|r|c|c|c|c|}
\hline
\vstrut{12pt} $q$	& Rooks		& Bishops		& Queens		& Nightriders \\
\hline
\vstrut{12pt} 1	& A000290	& A000290	& A000290	& A000290 \\
\vstrut{12pt} 2	&\ A163102*	& A172123	& A036464	& A172141 \\
\vstrut{12pt} 3	& A179058	& A172124	& A047659	& A173429 \\
\vstrut{12pt} 4	& A179059	& A172127	& A061994	& ---	\\
\vstrut{12pt} 5	& A179060	& A172129	& A108792	& ---	\\
\vstrut{12pt} 6	& A179061	& A176886	& A176186	& ---	\\
\vstrut{12pt} 7	& A179062	& A187239	& A178721	& ---	\\
\vstrut{12pt} 8	& A179063	& A187240	& ---	& ---	\\
\vstrut{12pt} 9	& A179064	& A187241	& ---	& ---	\\
\vstrut{12pt} 10	& A179065	& A187242	& ---	& ---	\\
\hline
\end{tabular}
\end{center}
\medskip
\caption{Sequence numbers in the OEIS for  nonattacking placements of $q$ rooks, bishops, queens, and nightriders.  In each sequence the board size $n$ varies from 1 to (usually) 1000.  * means $n$ in the OEIS is offset from our value.}
\label{Tb:oeis}
\end{table}

A summary of this paper:  Sections~\ref{essentials} and \ref{ehrhart} recall essential notation and formulas from Parts~I--IV and describe the concepts we use to analyze periods; in Section \ref{sec:parity} we strengthen the Parity Theorem from Part II.   
After connecting to our theory the known results on rooks and semirooks in Section~\ref{R}, we discuss the current state of knowledge and ignorance about bishops and semibishops in Section~\ref{B}.  
Section~\ref{Q} treats the queen as well as the partial queens that are not the rook, semirook, bishop, and semibishop.  Section~\ref{N} concerns the nightrider and its sub-pieces, whose nonattacking placements have not been the topic of any previous theoretical discussion that we are aware of and consequently receive detailed treatment.

We conclude with questions related to these ideas and with proposals for research.  For example, we suggest in Section~\ref{simplified} fairy chess pieces with relatively simple behavior that might provide insight into the central open problem of a good general bound on the period of the counting quasipolynomial in terms of $q$ and the set of moves.  

We append a dictionary of notation for the benefit of the authors and readers.

\section{Essentials, Mostly from Before}\label{essentials}

Our \emph{board} $\cB$\label{d:cB} is the unit square $[0,1]^2$.  (In Section~\ref{semibishop} it may also be a right triangle; for simplicity here we assume the square.) 
Pieces are placed on integral points, $(x,y)$ for $x,y \in [n] := \{1,\ldots,n\}$, in the interior of an integral dilation $(n+1)\cB$ of the board; e.g., the square board $(n+1)[0,1]^2$ has interior $(n+1)(0,1)^2$.  We also call these integral points the \emph{board}; it will always be clear which board we mean.  

A piece $\pP$ has \emph{moves}, which are the integral multiples of a finite set $\M$\label{d:moveset} of \emph{basic moves}, which are non-zero, non-parallel integral vectors $m = (c,d) \in \bbZ^2$\label{d:mr}.  
For instance, $\M=\{(1,0),(1,1),(0,1), (1,-1)\}$ for the queen and $\M=\{(2,1),(1,2),(2,-1),(1,-2)\}$ for the nightrider.  
Each basic move must be reduced to lowest terms and no basic move may be a scalar multiple of another.  Thus, the slope $d/c$\label{d:slope-hyp} contains all  necessary information and can be specified instead of $m$ itself.  
We say two distinct pieces \emph{attack} each other if the difference of their locations is a move.  
In other words, a piece in position $z:=(x,y) \in \bbZ^2$ attacks any other piece in the lines $z + r m$ for $r \in \bbZ$ and $m \in \M$.  
Attacks are not blocked by a piece in between, and they include the case where two pieces occupy the same location.

A \emph{configuration} $\bz=(z_1,\ldots,z_q)$\label{d:config} is a choice of locations $z_i:=(x_i,y_i)$ for the $q$ pieces on the dilated board $(n+1)\cB$, including the boundary.  Thus $\bz$ is an integral point in the dilated polytope $(n+1)\cB^q = (n+1)\cube$.\label{d:cP}  
For a \emph{nonattacking configuration} the pieces must be in the interior, $(n+1)(0,1)^2$, and no two pieces may attack each other.  In other words, if there are pieces at positions $z_i$ and $z_j$, then $z_j-z_i$ is not a multiple of any $m\in\M$.  

The attack lines determine \emph{move hyperplanes} in $\bbR^{2q}$,\label{d:configsp} $\cH_{ij}^{d/c}$: $(z_j-z_i)\cdot m^\perp = 0$ for each $m=(c,d)\in\M$, where $m^\perp := (d,-c)$,\label{d:mrperp} whose integral points in the dilated open hypercube $(n+1)\ocube$ represent attacking configurations.  The nonattacking configurations are the integral points in the open hypercube and outside every hyperplane, so it is they we want to count.  
The combination of the hypercube $\cube$ and the set $\cA_\pP$\label{d:AP} of hyperplanes is called an \emph{inside-out polytope} \cite{IOP}.  Its \emph{vertices} are all the points in $\cube$ that are intersections of move hyperplanes and facet hyperplanes of $\cube$.  
From \cite{IOP} we know $o_\pP(q;n)$ is a quasipolynomial whose period divides the \emph{denominator} $D(\cube,\cA_\pP)$,\label{d:D} defined as the least common multiple of the denominators of all coordinates of vertices.  
In Section \ref{Nprep} we continue this exposition for use with the nightrider.

The bishop, rook, and queen are examples of \emph{partial queens}.  
A partial queen $\pQ^{hk}$\label{d:partQ} is a rider that has $h$ horizontal or vertical basic moves and $k$ basic moves at $\pm45^\circ$ to the horizontal (obviously, $h,k\leq2$).  We studied partial queens in Part~III.  
Table~\ref{Tb:pqueensperiods} contains a list of the partial queens with their names and what we know or believe about the periods of their counting quasipolynomials.  
We think four of the partial queens are uniquely special: we believe the rook, semirook, semibishop, and anassa are the only rider pieces that have period 1---that is, whose labelled counting functions are polynomials in $n$ for every number $q$ of copies.  We do know from Theorem \denomone\ that they are the only riders whose denominators equal 1.  

\begin{table}[hbt]
\begin{center}
\begin{tabular}{|l|l||c|c|c|c|c|c|}
\hline
\quad \emph{Name} & $(h,k)$ \vstrut{15pt} &$q=2$	& $q=3$	& $q=4$	& $q=5$	& $q=6$	& $q>6$	
\\[2pt] \hline
Semirook & $(1,0)$ \vstrut{13pt}
&1	&1	&1	&1	&1	&1
\\[2pt] \hline
Rook & $(2,0)$ \vstrut{13pt}
&1	&1	&1	&1	&1	&1
\\[2pt] \hline
Semibishop & $(0,1)$ \vstrut{13pt}
&1	&1	&1	&1	&1	&1
\\[2pt] \hline
Anassa	& $(1,1)$ \vstrut{13pt}
&1	&1	&1	&1	&1	&1* ($q=7,8$)
\\[2pt] \hline
Bishop & $(0,2)$ \vstrut{13pt}
&1	&2	&2	&2	&2	&2
\\[2pt] \hline
Semiqueen & $(2,1)$ \vstrut{13pt}
&1	&1	&2*	&2*	&6*	&12$^?$ ($q=7$)	
\\[2pt] \hline
Trident 	& $(1,2)$ \vstrut{13pt}
&1	&2	&6*	&12$^?$	&60$^?$	&420$^?$ ($q=7$)
\\[2pt] \hline
Queen & $(2,2)$ \vstrut{13pt}
&1	&2	&6* (6)	&60*	&$840^{**}$	&360360* ($q=7$)	
\\ [2 pt] \hline
\end{tabular}
\bigskip
\caption{The quasipolynomial periods (and denominators, when known) for partial queens $\pQ^{hk}$.   Denominators are defined in Section~\ref{Nprep}.  All known periods equal the denominators.
{\small 
\newline* is a number deduced from an empirical formula in \cite{ChMath}.  
\newline** is deduced from the empirical formula of Karavaev; see \cite{KaravaevWeb, ChMath}.
\newline${}^?$ is a value we conjecture.
} 
}
\label{Tb:pqueensperiods}
\end{center}
\end{table}

While the number of nonattacking configurations of unlabelled pieces is $u_\pP(q;n)$, the counting is done with labelled pieces; that number is $o_\pP(q;n) = q!u_\pP(q;n)$\label{d:distattacks}.  
Our task is to find the coefficients $\gamma_i(q)$, or in practice $q! \gamma_i(q)$, which we know to be polynomials in $q$ that may differ for each residue class of $n$ modulo the period $p$ (Theorem~I.4.2).  Ehrhart theory says that the leading coefficient of $o_\pP(q;n)$ is the volume of the polytope $\cube$, i.e., 1; so $\gamma_0=1/q!$ for every piece.  

Two nonattacking configurations of labelled pieces are said to have the same \emph{labelled combinatorial type} if for each two pieces $\pP_i$ and $\pP_j$ and move $m_k$, $\pP_j$ lies on the same side of the move line through $\pP_i$ in the direction of $m_k$ in both configurations.  That is, $(z_j-z_i)\cdot m^\perp$ should have the same sign in both.  (Section I.5 explains more about combinatorial types.)  Two unlabelled configurations have the same \emph{combinatorial type} if they can be labelled to have the same labelled combinatorial type.  For instance, the number of combinatorial types of one piece is 1, and the number of types for 2 copies of a piece with $r$ moves is $r$.  We proved in Theorem I.5.3 that the number of combinatorial types of nonattacking configuration of $q$ copies of a piece $\pP$ is $u_\pP(q;-1)$.  Table~\ref{Tb:pqueenstypes} shows the number of combinatorial types for small numbers of partial queens.

\begin{table}[hbt]
\begin{center}
\begin{tabular}{|l|l||c|c|c|c|c|c|}
\hline
\quad \emph{Name} & $(h,k)$ \vstrut{15pt} &$q=2$	& $q=3$	& $q=4$	& $q=5$	& $q=6$& $q>6$	
\\[2pt] \hline \hline
Semirook & $(1,0)$ \vstrut{13pt}
&1	&1	&1	&1	&1	&1	
\\[2pt] \hline
Rook & $(2,0)$ \vstrut{13pt}
&2	&6	&24	&120	&720	&$q!$	
\\[2pt] \hline
Semibishop & $(0,1)$ \vstrut{13pt}
&1	&1	&1	&1	&1	&1	
\\[2pt] \hline
Anassa	& $(1,1)$ \vstrut{13pt}
&2	&6	&24	&120	&720	&$q!$	
\\[2pt] \hline
Bishop & $(0,2)$ \vstrut{13pt}
&2	&6	&24	&120	&720	&$q!$	
\\[2pt] \hline
Semiqueen & $(2,1)$ \vstrut{13pt}
&3	&17	&	&	&	&	
\\[2pt] \hline
Trident 	& $(1,2)$ \vstrut{13pt}
&3	&17	&	&	&	&	
\\[2pt] \hline
Queen & $(2,2)$ \vstrut{13pt}
&4	&36	&574*	&14206*	&501552**	&	
\\ [2 pt] \hline
\end{tabular}
\bigskip
\caption{The number of combinatorial types of configuration for nonattacking partial queens $\pQ^{hk}$.  Values for $h+k\leq2$ are from Theorem I.5.8.  Others are from Table III.4.3.
{\small 
\newline* is a number inferred from an empirical formula in \cite{ChMath}.  
\newline** is inferred from the empirical formula of Karavaev; see \cite{KaravaevWeb, ChMath}.
} 
}
\label{Tb:pqueenstypes}
\end{center}
\end{table}
%

\section{The Rook and Its Squire }\label{R}\


\subsection{The rook }\label{rook}\

Rooks illustrate our approach nicely because they are well understood.  
The basic move set, of course, is $\M_\pR = \{(1,0),(0,1)\}$.  
The well-known elementary formula is 
\begin{equation}\label{E:rooks}
u_\pR(q;n) = q! \binom{n}{q}^2 = \frac{1}{q!} (n)_q^2,  
\end{equation}
where $(n)_q$ denotes the falling factorial.  Thus, $o_\pR(q;n)=q!u_\pR(q;n)$ is a quasipolynomial of period 1 (that is, a polynomial) and degree $2q$, in accordance with our general theory.  
In our approach, we want to study its coefficients.

The coefficient of $n^{2q-i}$ is
\begin{equation}
q! \gamma_i = \sum_{k=0}^i s(q,q-k)s(q,q-(i-k)), 
\label{E:rookcoeffs}
\end{equation}
where $s(q,j)$\label{d:s} denotes the Stirling number of the first kind, defined as $0$ if $j<0$ or $j>q$.  
For instance,
\begin{align*}
q!\gamma_0 &= 1,  
&q!\gamma_1 &= -(q)_2,  
\\
q!\gamma_2 &= (q)_2 \frac{3q^2-5q+1}{6},  
&q!\gamma_3 &= -(q)_3 \frac{q(q-1)^2}{6}.
\end{align*}
These formulas, derived from Equation \eqref{E:rookcoeffs}, agree with the general partial queens formulas in Theorem~\Phvdiag.  (Recall that the rook is the partial queen $\pQ^{20}$.)

The sign of each term in the summations in \eqref{E:rookcoeffs} is $(-1)^i$, so that is the sign of $\gamma_i$ for $0\leq i \leq 2q-2$.  For $i>2q-2$, $\gamma_i=0$ because $s(q,0)=0.$  
The rook is one of few pieces for which we know the sign of every term in $o_\pP(q;n)$.

\begin{prop}\label{P:rookform}
The coefficient $q!\gamma_i$ is a polynomial in $q$ of degree $2i$.  It has a factor $(q)_{\lceil i/2 \rceil+1}$.  
The coefficient of $q^{2i}$ is 
\begin{equation}
\frac{1}{(2i)!} \sum_{k=0}^i \binom{2i}{2k} 
\sum_{r=0}^k (-1)^r \binom{2k}{k+r} S(k+r,r) 
\sum_{s=0}^{i-k} (-1)^s \binom{2(i-k)}{i-k+s} S(i-k+s,s) ,
\label{E:rookleading}
\end{equation}
whose sign is $(-1)^i$.
\end{prop}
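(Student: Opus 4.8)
The plan is to work entirely from identity \eqref{E:rookcoeffs}, $q!\gamma_i=\sum_{k=0}^{i}s(q,q-k)\,s(q,q-(i-k))$, and to reduce all the assertions to two facts about $s(q,q-k)$ viewed as a polynomial in $q$: (a) it has degree $2k$ and, for $k\geq1$, vanishes at $q=0,1,\dots,k$; and (b) its leading coefficient equals $\dfrac{1}{(2k)!}\sum_{r=0}^{k}(-1)^r\binom{2k}{k+r}S(k+r,r)$, a nonzero rational whose sign is $(-1)^k$.

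For (a) I would classify a permutation of $[q]$ by its set $F$ of non-fixed points: if $|F|=m$ then the restriction to $F$ is fixed-point-free, and the permutation has $q-k$ cycles exactly when that restriction has $m-k$ cycles, which forces $k+1\le m\le 2k$. Writing $d_{m,k}$ for the number of fixed-point-free permutations of an $m$-set with $m-k$ cycles, this gives $|s(q,q-k)|=\sum_{m=k+1}^{2k}d_{m,k}\binom{q}{m}$, a polynomial in $q$ of degree $2k$ (the top term coming from $m=2k$) that vanishes at $q=0,\dots,k$; then $s(q,q-k)=(-1)^k|s(q,q-k)|$. Since the leading term of $\binom{q}{m}$ is $q^m/m!$ and $d_{2k,k}=(2k-1)!!$ (with no fixed points, $k$ cycles of total length $2k$ must all be transpositions), the leading coefficient of $s(q,q-k)$ is $(-1)^k(2k-1)!!/(2k)!=(-1)^k/(2^k k!)$, which is nonzero with sign $(-1)^k$.

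With (a) in hand, each summand of \eqref{E:rookcoeffs} is a polynomial of degree $2i$, so $q!\gamma_i$ has degree at most $2i$; and for the factor (taking $i\geq1$) I would note that the summands with $k\in\{0,i\}$ are $s(q,q-i)$, divisible by $(q)_{i+1}$, while for $1\le k\le i-1$ the summand vanishes at $q=0,\dots,\max(k,i-k)$, hence is divisible by $(q)_{\max(k,i-k)+1}$; since $\max(k,i-k)\geq\lceil i/2\rceil$ and $i\geq\lceil i/2\rceil$ in every case, each summand, and therefore the whole sum $q!\gamma_i$, is divisible by $(q)_{\lceil i/2\rceil+1}$. For the coefficient of $q^{2i}$: writing $a_k$ for the leading coefficient of $s(q,q-k)$, it equals $\sum_{k=0}^{i}a_k a_{i-k}$. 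Using $a_k=B_k/(2k)!$ with $B_k:=\sum_{r=0}^{k}(-1)^r\binom{2k}{k+r}S(k+r,r)$ and $\frac{1}{(2k)!\,(2(i-k))!}=\binom{2i}{2k}/(2i)!$, this becomes exactly $\frac{1}{(2i)!}\sum_{k}\binom{2i}{2k}B_kB_{i-k}$, the formula in the statement. Each term $a_k a_{i-k}$ has sign $(-1)^k(-1)^{i-k}=(-1)^i$, so there is no cancellation: the coefficient of $q^{2i}$ is nonzero (hence $q!\gamma_i$ has degree exactly $2i$) and has sign $(-1)^i$. As a check one gets $\sum_k a_k a_{i-k}=\frac{(-1)^i}{2^i}\sum_k\frac{1}{k!(i-k)!}=\frac{(-1)^i}{i!}$.

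The one step requiring real work is reconciling the combinatorial value $a_k=(-1)^k/(2^k k!)$ with the $S$-form $B_k/(2k)!$ demanded by the statement, i.e.\ proving $\sum_{r=0}^{k}(-1)^r\binom{2k}{k+r}S(k+r,r)=(-1)^k(2k-1)!!$. I would prove this by the analogous classification for set partitions: letting $e_{m,k}$ be the number of partitions of an $m$-set into $m-k$ blocks all of size $\geq2$, one has $S(j,j-k)=\sum_{m}\binom{j}{m}e_{m,k}$, so after the substitution $j=k+r$ the sum equals $(-1)^k\sum_{m}e_{m,k}\sum_{j}(-1)^j\binom{2k}{j}\binom{j}{m}$; the inner sum, via $\binom{2k}{j}\binom{j}{m}=\binom{2k}{m}\binom{2k-m}{j-m}$ and $\sum_{\ell}(-1)^\ell\binom{2k-m}{\ell}$, is $1$ if $m=2k$ and $0$ otherwise, leaving $(-1)^k e_{2k,k}=(-1)^k(2k-1)!!$ since again the only partition of a $2k$-set into $k$ blocks of size $\geq2$ is a perfect matching. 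This also makes the sign of the stated formula transparent, as each $B_k$ is $(-1)^k$ times a positive integer. The main obstacle is thus bookkeeping this last identity (and keeping the ``polynomial in $q$'' claims valid down to small $q$), but both reduce to the two parallel subset-classification identities above together with the elementary alternating binomial sum.
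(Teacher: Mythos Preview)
Your proof is correct, but it takes a genuinely different route from the paper's. The paper invokes Schl\"omilch's formula
\[
s(q,q-k)=\sum_{r=0}^{k}(-1)^r\binom{q-1+r}{k+r}\binom{q+k}{k-r}S(k+r,r),
\]
reads off directly that $s(q,q-k)$ is a polynomial in $q$ of degree $2k$ with leading coefficient $\frac{1}{(2k)!}\sum_r(-1)^r\binom{2k}{k+r}S(k+r,r)$, and obtains the sign by noting that the leading term must share the sign of $s(q,q-k)$ for large $q$. The factor $(q)_{\lceil i/2\rceil+1}$ is then deduced in one line from the vanishing of $\gamma_i$ for $i>2q-2$.

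You instead derive everything combinatorially: classifying permutations by their non-fixed-point set gives $s(q,q-k)=(-1)^k\sum_{m=k+1}^{2k}d_{m,k}\binom{q}{m}$, which immediately yields the degree, the vanishing at $q=0,\dots,k$ (hence the factor, argued termwise), and the closed-form leading coefficient $(-1)^k/(2^k k!)$. The price is that you must then reconcile this with the $S$-form in the statement via the identity $\sum_r(-1)^r\binom{2k}{k+r}S(k+r,r)=(-1)^k(2k-1)!!$, which you prove by the parallel classification of set partitions by non-singleton elements together with an alternating binomial sum. The payoff is a fully self-contained argument that avoids citing Schl\"omilch and, as a bonus, gives the clean evaluation $\sum_k a_k a_{i-k}=(-1)^i/i!$ for the leading coefficient of $q!\gamma_i$, which is not visible from the paper's formula.
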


\begin{proof}
Schl\"omilch's formula \cite[p.\ 216]{Comtet}
\begin{equation}
s(q,q-k) = \sum_{r=0}^k (-1)^r \binom{q-1+r}{k+r} \binom{q+k}{k-r} S(k+r,r) 
\label{E:schlomilch}
\end{equation}
(which involves the Stirling numbers $S(n,k)$\label{d:S} of the second kind) tells us that $s(q,q-k)$ is a polynomial in $q$ of degree $2k$ with leading term
\begin{align*}
\sum_{r=0}^k (-1)^r \frac{q^{k+r}}{(k+r)!} \frac{q^{k-r}}{(k-r)!} S(k+r,r) 
&= \frac{q^{2k}}{(2k)!} \sum_{r=0}^k (-1)^r \binom{2k}{k+r} S(k+r,r)  .
\end{align*}
This term, as the leading term, must have the same sign as $s(q,q-k)$ for large $q$. 
So the leading coefficient of $q!\gamma_i$ is as in Equation~\eqref{E:rookleading} and the sign of this coefficient is $(-1)^i$.  

It is easy to infer from \eqref{E:rookcoeffs} that the polynomial equals 0 if $i > 2q-2$, i.e., $q \leq \lceil i/2 \rceil$; therefore $q(q-1)\cdots(q-\lceil i/2 \rceil)$ is a factor.
\end{proof}

The number of combinatorial types of nonattacking configuration of $q$ (unlabelled) rooks is $q!$.  To prove it we may substitute $n=-1$ into Equation~\eqref{E:rooks} (by Theorem~I.5.3) or apply Theorem~I.5.8, which says that every piece with two basic moves has $q!$ combinatorial configuration types.


\subsection{The semirook}\label{semirook}\

The semirook $\pQ^{10}$ has only one of the rook's moves and is consequently the least interesting of all riders, except that it is the second of the four known pieces with period 1.  Also, because it has no diagonal move, it exemplifies Corollary~\CQhone, that partial queens with at most one diagonal move have a coefficient $\gamma_5$ that is independent of $n$.  
The easy counting formula is
$$u_{\pQ^{10}}(q;n) = \binom{n}{q} n^q = \frac{1}{q!} \sum_{i=0}^q s(q,q-i) n^{2q-i},$$
agreeing with Proposition~II.6.1, where one should take $(c,d)=(1,0)$, and with Tables~\Tbpqueenspolystwo\ and \Tbpqueenspolysthree\ for $q=2, 3$.  The leading coefficients of those polynomials are stated in Theorem~\Phvdiag\ and Tables~\Tbpqueensgammatwo\ and \Tbpqueensgammathree.  Schl\"omilch's formula \eqref{E:schlomilch} shows that, as our theory says, $q!\gamma_i = s(q,q-i)$ is a polynomial in $q$ of degree $2i$ for each $i=1,\ldots,q$.


\section{The Bishop and Its Scion}\label{B}\

Here we treat the bishop and its scion the semibishop.


\subsection{The bishop }\label{bishop}\

The basic move set is $\M_\pB=\{(1,1),(1,-1)\}$.  
The quasipolynomial formulas for up to 6 bishops, published by \Kot\ in early editions of \cite{ChMath}---most of which were found by him---are:  
\begin{equation}\label{E:Bkot}
\begin{aligned}
&u_\pB(1;n) = n^2. \\[5pt]
&u_\pB(2;n) = \frac{n^4}{2}-\frac{2 n^3}{3}+\frac{n^2}{2}-\frac{n}{3}. \\[5pt]
&u_\pB(3;n) = \left\{\frac{n^6}{6}-\frac{2 n^5}{3}+\frac{5 n^4}{4}-\frac{5 n^3}{3}+\frac{4 n^2}{3}-\frac{2 n}{3}+\frac{1}{8}\right\} - (-1)^n\dfrac{1}{8}. \\[5pt]
&u_\pB(4;n) = 
\left\{\frac{n^8}{24}-\frac{n^7}{3}+\frac{11 n^6}{9}-\frac{29 n^5}{10}+\frac{355 n^4}{72}-\frac{35 n^3}{6}+\frac{337 n^2}{72}-\frac{73 n}{30}+\frac{1}{2}\right\}  \\
&\qquad\qquad\ - (-1)^n\left\{\frac{n^2}{8}-\frac{n}{2}+\frac{1}{2}\right\}.\\[5pt]
&u_\pB(5;n) = \left\{\frac{n^{10}}{120}-\frac{n^9}{9}+\frac{49 n^8}{72}-\frac{118 n^7}{45}+\frac{523 n^6}{72}-\frac{2731 n^5}{180}+\frac{3413 n^4}{144}-\frac{4853 n^3}{180} \right. \\
&\qquad\qquad\quad \left.+\frac{2599 n^2}{120}-\frac{1321 n}{120}+\frac{9}{4}\right\}  - (-1)^n\left\{\frac{n^4}{16}-\frac{7 n^3}{12}+\frac{17 n^2}{8}-\frac{85 n}{24}+\frac{9}{4}\right\}.\\[5pt]
&u_\pB(6;n) = \left\{\frac{n^{12}}{720}-\frac{n^{11}}{36}+\frac{37n^{10}}{144}-\frac{4813n^9}{3240}+\frac{8819n^8}{1440}-\frac{72991n^7}{3780}+\frac{2873n^6}{60} \right. & \\
&\qquad\qquad\quad \left. -\frac{100459n^5}{1080}+\frac{199519n^4}{1440}-\frac{498557n^3}{3240}+\frac{14579n^2}{120}-\frac{7517n}{126}+\frac{765}{64}\right\} & \\
&\qquad\qquad\ - (-1)^n\left\{\frac{n^6}{48}-\frac{n^5}{3}+\frac{221n^4}{96}-\frac{211n^3}{24}+\frac{467n^2}{24}-\frac{47n}{2}+\frac{765}{64}\right\}.
\end{aligned}
\end{equation}
For $q\leq4$ these were rigorously proved by Dudeney and Fabel (see \Kot\ \cite[p.\ 234]{ChMath} for these attributions and citations).  The formulas for $q=2,3$ are special cases of our Theorems~\TtwoQhk\ and \TthreeQhk, thereby reinforcing the correctness of those theorems.  
\Kot\ found the formulas for $q=5,6$ heuristically, by calculating the values $u_\pB(q;n)$ for many values of $n$, looking for an empirical recurrence relation, deducing a generating function, and from that getting the quasipolynomial.  
(See Section~\ref{rr} for more about his method.)  
His approach, while excellent for finding formulas, does not prove their validity because it does not bound the period---even though period 2 is plausible since one could guess that odd and even board sizes would have separate polynomials.  

We have proved that 2 is the complete story on the period.  
In Theorem~\Tbishopsperiod\ \cite{QQs6} we provide the missing upper bound that rigorously establishes period 2 for every $q>2$ and hence the correctness of \Kot's quasipolynomial formulas.\footnote{Stanley in \cite[Solution to Exercise 4.42]{EC1-2} says that both quasipolynomiality and the period follow directly from Arshon's formulas; however, we believe such a derivation would be difficult.}  
Together with the fact that we know the degree $2q$ and the leading coefficient $1/q!$ of the constituent polynomials, this implies that, if the first $4q$ values of a candidate quasipolynomial are correct, then we have $u_\pB(q;n)$.  Since \Kot\ did check those values for $q \leq 6$ \cite{Kotpc}, his formulas are proved.

\begin{thm}\label{T:B}
All the formulas in Equations \eqref{E:Bkot} are correct.
\end{thm}

Despite the overall period 2, in \Kot's formulas \eqref{E:Bkot} the six leading coefficients do not vary with the parity of $n$.  \Kot\ conjectured expressions for $\gamma_{1}$, $\gamma_{2}$, and $\gamma_{3}$ in terms of $q$ alone; we proved them in Theorem~\Phvdiag\ (and see Tables~\Tbpqueensgammatwo\ and \Tbpqueensgammathree) since the bishop is the partial queen $\pQ^{02}$.  That theorem also gives the periods of $\gamma_4$ and $\gamma_5$.

\begin{cor}[of Theorem~\Phvdiag]\label{C:Btopcoeffs}
In $u_\pB(q;n)$ the coefficients $\gamma_i$ for $i\leq5$ are constant as functions of $n$.  
\end{cor}

As the number of combinatorial types of nonattacking configuration of $q$ unlabelled bishops is $q!$ by Theorem~I.5.8, we know this is the value of $u_\pB(q;-1)$ even though we do not know the general formula for $u_\pB(q;n)$.

\medskip
A surprising development during our work on this project was \Kot's discovery that, in 1936, Arshon had solved the $n$-bishops problem, the number of ways to place $n$ nonattacking bishops on an $n\times n$ board \cite{Arshon}.  His method was to count independently the number of ways to place $i$ nonattacking bishops on the black squares and on the white squares. 
This work was forgotten until \Kot\ rediscovered it.  It was an easy step for him to write down an explicit formula for the number of placements of $q$ nonattacking bishops \cite[fourth ed., p.\ 140]{ChMath}.  
\Kot\ then restated the Arshon equations with no subtractive terms by using Stirling numbers of the second kind \cite[fourth ed., p.\ 142]{ChMath}.  His formula for $q$ bishops is
\begin{equation}
\begin{aligned}
u_\pB(q;n) = &\sum_{i=0}^q 
\sum_{j=0}^{\left\lfloor\tfrac{n+1}{2}\right\rfloor} \binom{\lfloor\tfrac{n+1}{2}\rfloor}{j} S\big(j+\lfloor \tfrac{n}{2} \rfloor, n-i\big) 
\cdot 
\sum_{h=0}^{\left\lfloor \tfrac{n}{2} \right\rfloor} \binom{\lfloor \tfrac{n}{2} \rfloor}{h} S\big(h+\lfloor \tfrac{n+1}{2} \rfloor, n-(q-i)\big) .
\label{E:AKbish}
\end{aligned}
\end{equation}

For us this is not entirely satisfactory.  Since the number of terms depends on $n$, \eqref{E:AKbish} does not give the quasipolynomial form of $u_\pB(q;n)$ and does not allow us to substitute $n=-1$ to obtain the number of combinatorial types of nonattacking configuration (though for the bishop this number is known, obtained from Theorem~I.5.8).  
We consequently take the point of view that bishops formulas, like those for other pieces, call for a quasipolynomial analysis via Ehrhart theory, so there is room for further work.

\subsection{The semibishop}\label{semibishop}\

We now come to the third piece known to have period 1.  
The \emph{semibishop} $\pQ^{01}$ has just one of the bishop's moves, say $(c,d)=(1,1)$.  Thus, it is an example of a one-move rider (Section II.6).  As such it has counting functions
\begin{align*}
u_{\pQ^{01}}(1;n) &= n^2, 
\\
u_{\pQ^{01}}(2;n) &=  \frac{1}{2} n^4 - \frac{1}{3} n^3 - \frac{1}{6} n, 
\\
u_{\pQ^{01}}(3;n) &=
\frac{1}{6} n^6 - \frac{1}{3} n^5 + \frac{1}{6} n^4 - \frac{1}{6} n^3 + \frac{1}{6} n^2  ,
\\
u_{\pQ^{01}}(4;n) &=
\frac{1}{24} n^8 - \frac{1}{6}  n^7 + \frac{2}{9} n^6 - \frac{11}{60} n^5 
+ \frac{2}{9} n^4 - \frac{1}{6}  n^3 + \frac{1}{72} n^2+\frac{1}{60} n,
\end{align*}
from Proposition~II.6.1 (in which $\barn=0$).  All of these are polynomials in $n$.  

\begin{thm}\label{P:semibishop}
The counting function for nonattacking unlabelled semibishops on the square board is  
$$
u_{\pQ^{01}}(q;n) = (-1)^q \sum_{k=0}^q s(n+1,n+1-k) s(n,n-(q-k)),
$$
which is a polynomial function of $n$ of degree $2q$.  
\end{thm}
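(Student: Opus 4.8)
The plan is to reduce to a combinatorial count, package it as an elementary symmetric function, pass to a generating function in an auxiliary variable $t$, and expand by a Stirling-number identity.

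First I would note that for the semibishop with move $(1,1)$ we have $m^\perp=(1,-1)$, so two pieces attack exactly when $x_i-y_i=x_j-y_j$, i.e.\ when they lie on a common diagonal of slope~$1$; in particular distinct diagonals force distinct squares. Hence a nonattacking placement of $q$ unlabelled semibishops on the square board $[n]^2$ is precisely a choice of a $q$-element set of slope-$1$ diagonals together with one board square on each; since the diagonal $x-y=k$ meets $[n]^2$ in $n-|k|$ squares for $k=-(n-1),\dots,n-1$, this gives
\[
u_{\pQ^{01}}(q;n)=\sum_{\substack{S\subseteq\{-(n-1),\dots,n-1\}\\|S|=q}}\ \prod_{k\in S}(n-|k|)=e_q(v_1,v_2,\dots),
\]
the $q$th elementary symmetric function of the multiset holding $n$ once and each of $1,2,\dots,n-1$ twice.

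Next I would pass to the generating function. From $\sum_{q\ge0}e_q(v_1,v_2,\dots)t^q=\prod_i(1+v_it)$ one gets
\[
\sum_{q\ge0}u_{\pQ^{01}}(q;n)\,t^q=(1+nt)\prod_{j=1}^{n-1}(1+jt)^2=\Bigl[\prod_{j=1}^{n}(1+jt)\Bigr]\Bigl[\prod_{j=1}^{n-1}(1+jt)\Bigr].
\]
Then I would invoke the identity $\prod_{j=1}^{m}(1+jt)=\sum_{p\ge0}(-1)^p\,s(m+1,m+1-p)\,t^p$, valid for all $m\ge0$: it is $(x)_{m+1}=\sum_{\ell}s(m+1,\ell)x^\ell$ evaluated at $x=1/t$ (which yields $\prod_{j=1}^m(1-jt)=\sum_p s(m+1,m+1-p)t^p$), followed by $t\mapsto -t$. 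Applying this with $m=n$ and $m=n-1$ to the two bracketed products and reading off the coefficient of $t^q$ gives
\[
u_{\pQ^{01}}(q;n)=\sum_{k=0}^{q}(-1)^k s(n+1,n+1-k)\,(-1)^{q-k}s(n,n-(q-k))=(-1)^q\sum_{k=0}^{q}s(n+1,n+1-k)\,s(n,n-(q-k)),
\]
which is the asserted formula.

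Finally, for the degree statement: by Schl\"omilch's formula, as used already in the proof of Proposition~\ref{P:rookform}, $s(m,m-\ell)$ is a polynomial in $m$ of degree $2\ell$ with a nonzero leading coefficient of sign $(-1)^\ell$. Hence each summand $s(n+1,n+1-k)s(n,n-(q-k))$ is a polynomial in $n$ of degree $2q$ whose leading coefficient has sign $(-1)^q$, so after multiplying by the outer $(-1)^q$ all the leading coefficients are positive and cannot cancel; thus $u_{\pQ^{01}}(q;n)$ is a polynomial in $n$ of degree exactly $2q$ (and a short computation gives leading coefficient $1/q!$, as required by $\gamma_0=1/q!$). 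The only place that needs genuine care is pinning down the Stirling identity in the paper's signed convention with the correct sign and index shift; everything else is routine bookkeeping.
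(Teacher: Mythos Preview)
Your argument is correct and, for the identity itself, follows essentially the same route as the paper: count placements via the multiset of diagonal lengths, recognize the elementary symmetric function, and factor the generating function as $\prod_{j=1}^{n}(1+jt)\cdot\prod_{j=1}^{n-1}(1+jt)$ to land on the convolution of Stirling numbers. The paper packages this by first isolating each factor as the count on a right-triangle board (Proposition~\ref{P:triangleboard}), but the content is the same.

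The one genuine difference is the polynomiality/degree claim. The paper does not argue, as you do, that the Schl\"omilch leading coefficients all have the same sign and hence cannot cancel; instead it uses the Ehrhart machinery developed earlier: by Proposition~\ref{p:1moveIV} the inside-out polytope for a one-move rider has denominator equal to the least common denominator of the corners and antipodes, which for the triangle board (and hence the square) is $1$, forcing the quasipolynomial to be an honest polynomial, with degree $2q$ supplied by the general theory. Your route is more self-contained and elementary; the paper's route illustrates the Ehrhart framework and gives polynomiality without any sign analysis.
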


\begin{proof}
This is an immediate consequence of Proposition~\ref{P:triangleboard} below.  Alternatively, it can be proved similarly to that proposition.  
\end{proof}

Explicit formulas for the coefficients $\gamma_i$ for $i\leq3$ are in Theorem~\Phvdiag\ and Tables~\Tbpqueensgammatwo\ and \Tbpqueensgammathree.  

\Kot\ independently proposed this same formula in \cite[fourth ed., p.\ 155; sixth ed., p.\ 265]{ChMath} and verified it for $n\leq20$ and some values of $q$, without a proof.

We prepare for the proof of Theorem~\ref{P:semibishop} by changing the board.  The \emph{right triangle board} (\emph{triangular board} for short) has legs parallel to the axes and hypotenuse in the direction of the semibishop's move; thus, it is the set $\cT := \{(x,y) \in \bbR^2 : 0 \leq x \leq y \leq 1\}$\label{d:cT}.  The \emph{$n\times n$ triangular board} is the set of integral points in the interior of the dilation by $n+2$, i.e., 
$$
(n+2)\cT^\circ \cap \bbZ^2 = \{ (x,y) \in \bbZ^2 : 1 \leq x \leq y-1 \leq n \}.
$$
Write $u^\cT_{\pQ^{01}}(q;n)$ for the counting function of nonattacking placements  of $q$ unlabelled semibishops on an $n\times n$ triangular board.  Most of our theory for the square board applies equally well to the triangular board; we omit details.

\begin{prop}\label{P:triangleboard}
The counting function for nonattacking unlabelled semibishops on the triangular board is  
$u^\cT_{\pQ^{01}}(q;n) = (-1)^q s(n+1,n+1-q).$
\end{prop}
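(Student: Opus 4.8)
I want to count nonattacking placements of $q$ semibishops (single move $(1,1)$) on the $n\times n$ triangular board $\cT$, whose hypotenuse runs parallel to the move. The key geometric fact is that a semibishop at $(x,y)$ attacks another semibishop at $(x',y')$ exactly when $y-x = y'-x'$, i.e.\ when they lie on the same line of slope $1$. On the triangular board $\{1\le x\le y-1\le n\}$, the diagonals of slope $1$ are indexed by the value $\ell := y-x \ge 1$, and the diagonal with index $\ell$ contains exactly $n+1-\ell$ lattice points (for $\ell = 1,\dots,n$). A nonattacking configuration of $q$ unlabelled semibishops is therefore nothing more than a choice of $q$ distinct diagonals together with, on each chosen diagonal, a choice of which point on it is occupied. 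So
\[
u^\cT_{\pQ^{01}}(q;n) = \sum_{1\le \ell_1 < \cdots < \ell_q \le n} \prod_{t=1}^{q} (n+1-\ell_t) = e_q\big(n, n-1, \dots, 1\big),
\]
the elementary symmetric polynomial $e_q$ evaluated at $1,2,\dots,n$.

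**Identifying the Stirling number.** Now I invoke the standard generating-function identity $\prod_{j=1}^{n}(x+j) = \sum_{q=0}^{n} e_q(1,2,\dots,n)\, x^{n-q}$, whose left side is the rising factorial $(x+1)(x+2)\cdots(x+n)$. Comparing with the defining expansion of the (signless or signed) Stirling numbers of the first kind, $\prod_{j=0}^{n}(x+j) = \sum_k \left[{n+1\atop k}\right] x^k$, one reads off that $e_q(1,2,\dots,n)$ is precisely the coefficient of $x^{n+1-q}$ in $x\prod_{j=1}^n(x+j) = \prod_{j=0}^n(x+j)$, namely the unsigned Stirling number $\left[{n+1\atop n+1-q}\right] = (-1)^q s(n+1, n+1-q)$. (The shift by one $x$-factor is exactly why the board is dilated by $n+2$ rather than $n+1$: it forces $\ell\ge 1$, excluding the degenerate diagonal $y=x$ which carries no interior points — equivalently, the $j=0$ factor contributes the extra $x$.) This gives $u^\cT_{\pQ^{01}}(q;n) = (-1)^q s(n+1,n+1-q)$ directly, and the polynomiality in $n$ is then immediate since a fixed-degree Stirling number is a polynomial in its upper argument.

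**Obstacles and bookkeeping.** The mathematical content here is light; the one place to be careful is the exact dilation factor and the correspondence between the interior lattice points of $(n+2)\cT^\circ$ and the diagonals $\ell = 1,\dots,n$ with multiplicities $n+1-\ell$. I would verify this by writing the interior condition as $1 \le x$ and $x+1 \le y$ and $y \le n+1$, so that for fixed $\ell = y-x \in \{1,\dots,n\}$ the point count is $|\{x : 1\le x,\ x+\ell \le n+1\}| = n+1-\ell$, as claimed. A secondary, purely optional, remark: one can cross-check the formula at small $q$ against Proposition~II.6.1 applied with $\barn = 0$ to confirm consistency with the square-board specializations. The main (very minor) obstacle is thus just aligning the off-by-one conventions; everything else is the elementary identity relating $e_q(1,\dots,n)$ to a Stirling number of the first kind, which I would either cite or prove in one line via the rising-factorial generating function.
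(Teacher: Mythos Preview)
Your proof is correct and follows essentially the same approach as the paper's own proof: decompose the triangular board into its $n$ diagonals of slope $1$ (of lengths $1,2,\dots,n$), observe that a nonattacking placement of $q$ unlabelled semibishops amounts to choosing $q$ distinct diagonals and one point on each, and identify the resulting elementary symmetric function $e_q(1,2,\dots,n)$ with the unsigned Stirling number $\left[{n+1\atop n+1-q}\right] = (-1)^q s(n+1,n+1-q)$. Your version is in fact more careful about the off-by-one bookkeeping than the paper's, which is terse and contains a minor slip (it writes ``labelled'' where ``unlabelled'' is meant).
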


\begin{prop}\label{P:stirling}
The Stirling number of the first kind, $s(n+1,n+1-q)$, is a polynomial function of $n$ of degree $q$.  The coefficient of $n^{2q-i}$ in $q!(-1)^q s(n+1,n+1-q)$ is a polynomial function of $q$ of degree $2i$.
\end{prop}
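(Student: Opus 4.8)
The plan is to expand the Stirling number in the binomial basis by means of the \emph{associated} Stirling numbers. Write $c(m,j):=|s(m,j)|$ for the number of permutations of $[m]$ with $j$ cycles, and for a $j$-element set let $d(j,t)$ denote the number of permutations with $t$ cycles, all of length $\ge 2$ (so $d(0,0)=1$, and $d(j,t)=0$ unless $1\le t\le j/2$ when $j\ge 1$). Sorting a permutation of $[m]$ by its set of fixed points gives, for $q\ge 1$,
\[
(-1)^q s(n+1,n+1-q)=c(n+1,n+1-q)=\sum_{j=q+1}^{2q}\binom{n+1}{j}\,d(j,\,j-q).
\]
The right side is a polynomial in $n$ of degree at most $2q$, and its $j=2q$ term $\binom{n+1}{2q}d(2q,q)=(2q-1)!!\binom{n+1}{2q}$ shows the degree is exactly $2q$; this is the first assertion (it also follows from the Schl\"omilch formula used in the proof of Proposition~\ref{P:rookform}).

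For the second assertion I would set $g_q(n):=2^q q!\,(-1)^q s(n+1,n+1-q)$; the extra $2^q$ is the right normalization, and it cancels when the lemma is substituted into the convolution of Theorem~\ref{P:semibishop}, where each Stirling factor carries a reciprocal power of two. Reindexing $j=2q-\ell$,
\[
g_q(n)=\sum_{\ell=0}^{q-1}\widetilde{A}_\ell\,B_\ell(n),\qquad \widetilde{A}_\ell:=\frac{2^q q!\,d(2q-\ell,\,q-\ell)}{(2q-\ell)!},\quad B_\ell(n):=(2q-\ell)!\,\binom{n+1}{2q-\ell}.
\]
Since $B_\ell(n)=\prod_{u=-1}^{2q-\ell-2}(n-u)$ is monic of degree $2q-\ell$, only $\ell\le i$ affects the coefficient of $n^{2q-i}$ in $g_q$, and the coefficient of $n^{2q-i}$ in $B_\ell$ equals $(-1)^{i-\ell}e_{i-\ell}\bigl(\{-1\}\cup\{1,\dots,2q-\ell-2\}\bigr)$, where $e_r$ is the $r$th elementary symmetric function.

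I would then establish two sub-claims. (i) Since $e_r(1,2,\dots,M)=c(M+1,M+1-r)$, the first assertion (with $q$ replaced by $r$ and $n$ by $M$) makes this a polynomial in $M$, hence in $q$, of degree $2r$; writing $e_{i-\ell}(\{-1\}\cup\{1,\dots,M\})=e_{i-\ell}(1,\dots,M)-e_{i-\ell-1}(1,\dots,M)$ with $M=2q-\ell-2$ shows the coefficient of $n^{2q-i}$ in $B_\ell$ is a polynomial in $q$ of degree $2(i-\ell)$. (ii) The permutations counted by $d(2q-\ell,q-\ell)$ have $q-\ell$ cycles of total length $2q-\ell$, all of length $\ge 2$, so the only length in excess of transpositions is $\ell$ units, distributed as a partition $\mu\vdash\ell$ (of length $\ell(\mu)$, with $v$ appearing $m_v(\mu)$ times); classifying by $\mu$ yields
\[
\widetilde{A}_\ell=\sum_{\mu\vdash\ell}\frac{2^{\ell+\ell(\mu)}}{\prod_{v\ge 1}(v+2)^{m_v(\mu)}\,m_v(\mu)!}\;(q)_{\ell+\ell(\mu)},
\]
a polynomial in $q$ of degree $\max_{\mu\vdash\ell}\bigl(\ell+\ell(\mu)\bigr)=2\ell$, attained at $\mu=1^\ell$.

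Combining, the coefficient of $n^{2q-i}$ in $g_q$ is $\sum_{\ell=0}^{i}\widetilde{A}_\ell\cdot\bigl(\text{coefficient of }n^{2q-i}\text{ in }B_\ell\bigr)$, a sum of polynomials in $q$ of degree $2\ell+2(i-\ell)=2i$, hence a polynomial of degree $\le 2i$; reading off the $q^{2i}$-coefficients of $\widetilde{A}_\ell$ (namely $4^\ell/(3^\ell\ell!)$) and of $B_\ell$'s coefficient (namely $(-2)^{i-\ell}/(i-\ell)!$) gives the total $q^{2i}$-coefficient $\tfrac1{i!}\bigl(\tfrac43-2\bigr)^{i}=\tfrac{(-2/3)^i}{i!}\ne 0$, so the degree is exactly $2i$. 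The hard part will be precisely this normalization bookkeeping: neither $\widetilde{A}_\ell$ without the factor $2^q$ nor the reciprocal factorials $1/(2q-\ell)!$ are polynomial in $q$ on their own, and it is the near-perfect-matching enumeration in (ii) that makes the combined quantity manifestly polynomial (and that reveals why the $2^q$ is needed); for the same reason the statement should be read in the eventually-polynomial sense, since for small $q$ some partitions $\mu$ drop out of the sums.
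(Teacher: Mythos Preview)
Your argument is genuinely different from the paper's. The paper identifies $(-1)^q s(n+1,n+1-q)$ with the number of nonattacking placements of $q$ labelled semibishops on the triangular board $\cT$ (Proposition~\ref{P:triangleboard}), shows via Proposition~\ref{p:1moveIV} that the inside-out polytope $(\cT^q,\cA_{\pQ^{01}})$ has denominator $1$, and then invokes Theorem~I.4.2 for the polynomial-in-$q$ claim. You instead expand in the binomial basis through the associated Stirling numbers $d(j,t)$, reduce the $n$-coefficients to elementary symmetric functions of consecutive integers, and feed the first assertion back in to control the $q$-degree. Your route is self-contained and yields the explicit leading coefficient $(-2/3)^i/i!$, which the Ehrhart argument does not.

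You are right to insert the factor $2^q$: without it the second sentence of the Proposition is actually false. The coefficient of $n^{2q}$ in $q!\,(-1)^q s(n+1,n+1-q)$ equals $q!\cdot(2q-1)!!/(2q)! = 1/2^q$, which is not a polynomial in $q$. (The paper's appeal to Theorem~I.4.2 overlooks that the triangular board has area $1/2$, so $\cT^q$ has volume $1/2^q$; the normalization in I.4.2 is calibrated to the unit square.) However, your stated justification for the $2^q$ is off: Theorem~\ref{P:semibishop} has no reciprocal powers of two in it, so ``each Stirling factor carries a reciprocal power of two'' does not describe anything there. The real reason is already visible in your computation of $\widetilde A_\ell$: the cycle-type sum produces $2^{q-\ell-\ell(\mu)}(q-\ell-\ell(\mu))!$ in the denominator of each term, and multiplying by $2^q q!$ is exactly what converts this into the falling factorial $(q)_{\ell+\ell(\mu)}$. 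Say that instead.

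One small cleanup: the ``eventually-polynomial'' hedge is unnecessary. Since every $\mu\vdash\ell$ with $\ell\ge 1$ has $\ell(\mu)\ge 1$, the factor $(q)_{\ell+\ell(\mu)}$ vanishes whenever $\ell\ge q$, so $\widetilde A_\ell(q)=0$ for $\ell\ge q$ and the truncated sum $\sum_{\ell=0}^{q-1}$ agrees with the full sum $\sum_{\ell=0}^{i}$ for every $q\ge 1$.
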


The fact that $s(n+1,n+1-q)$ is a polynomial in $n$ of degree $2q$ follows from Schl\"omilch's formula \eqref{E:schlomilch} and is well known (see e.g.\ \cite{GS}).  We give a proof here which we believe to be new, using Ehrhart theory in the spirit of our chess series.  
We do not know a prior reference for the fact that the coefficients themselves are polynomials.

\begin{proof}
We prove both propositions together.

The $n\times n$ integral right triangle board has $n$ diagonals (parallel to the hypotenuse) of lengths $1,2,\ldots,n$, each of which can have at most one semibishop.  The number of ways to place $q$ labelled semibishops is the sum of all products of $q$ of these $n$ values, i.e., the elementary symmetric function $e_q(1,2,\ldots,n)$, which equals $|s(n+1,n+1-q)|$.  That proves Proposition \ref{P:triangleboard}.

Next, we prove $s(n+1,n+1-q)$ is a polynomial function of $n$.  
Proposition~\onemove\ applies because the semibishop is a one-move rider.  
The corners of the triangular board $\cT$ are $(0,0)$, $(1,1)$, and $(0,1)$.  For the move $(1,1)$, their antipodes are the opposite points of the boundary along the $45^\circ$ diagonal.  
The corner $(0,1)$ has no antipode while the corners $(0,0)$ and $(1,1)$ serve as each other's antipodes.  Thus, every vertex is integral and the denominator $D(\cT^q,\cA_{\pQ^{01}}) = 1$, so $q! u^\cT_{\pQ^{01}}(q;n)$ is a polynomial in $n$.  

Finally, Theorem~I.4.2 says that the coefficients are polynomials in $q$ with the stated degrees.
\end{proof}


\section{The Queen and Its Less Hardy Sisters}\label{Q}\

There are four pieces in this section: the queen, and three others that are like defective queens without being similar to bishops or rooks, which we call the semiqueen, the trident, and the anassa.  As they are all partial queens, counting formulas for $q=2,3$ are special cases of our Theorems~\TtwoQhk\ and \TthreeQhk\ and are presented in Tables~\Tbpqueenspolystwo\ and \Tbpqueenspolysthree.  The four leading coefficients of the general counting polynomial are implied by Theorems~\Phvdiag, \TthreeQhk, and \TthreeQhk;  see Tables~\Tbpqueensgammatwo\ and \Tbpqueensgammathree.  All our formulas that were also calculated by \Kot\ in \cite{ChMath} agree with his.

Let $\zeta_r := e^{2\pi i/r}$\label{d:zetar} be a primitive $r$-th root of unity.
We index the Fibonacci numbers $F_i$\label{d:Fib} so that $F_0=F_1=1$.


\subsection{The queen}\label{queen}\

The basic move set is $\M_\pQ=\{(1,0),(0,1),(1,1),(1,-1)\}$.  
The known quasipolynomial formulas for up to four queens are:  
\begin{equation}\label{E:Qkot}
\begin{aligned}
u_\pQ(1;n) &= n^2. \\[5pt]
u_\pQ(2;n) &= \frac{n^4}{2}-\frac{5 n^3}{3}+\frac{3 n^2}{2}-\frac{n}{3} = \frac{n(n-1)(3n^2-7n+2)}{6} 
 \\[5pt]
u_\pQ(3;n) &= \left\{\frac{n^6}{6}-\frac{5 n^5}{3}+\frac{79 n^4}{12}-\frac{25 n^3}{2}+11 n^2-\frac{43 n}{12}+\frac{1}{8}\right\} + (-1)^n \left\{\frac{n}{4}-\frac{1}{8}\right\}. \\[5pt]
u_\pQ(4;n) &= \left\{\frac{n^8}{24}-\frac{5n^7}{6}+\frac{65n^6}{9}-\frac{1051n^5}{30}+\frac{817n^4}{8}-\frac{19103n^3}{108}+\frac{3989n^2}{24}-\frac{18131n}{270}+\frac{253}{54}\right\} \\
&\quad + (-1)^n \left\{\frac{n^3}{4}-\frac{21n^2}{8}+7n-\frac{7}{2}\right\} 
 + \operatorname{Re}(\zeta_3^n) \frac{32(n-1)}{27} 
 + \operatorname{Im}(\zeta_3^n) \frac{40\sqrt3}{81}.
\end{aligned}
\end{equation}
The square and cube roots of unity in $u_\pQ(4;n)$ imply period 6.

The formula for two queens, originally due to Lucas, is given by our Theorem~\TtwoQhk.  The formula for three queens, due to Landau, is implied by our Theorem~\TthreeQhk.  \Kot\ gives formulas for up to six queens, calculated by him for $q=4, 5$ and calculated for six queens by Artem M.\ Karavaev.  (\cite{ChMath} has the citations.)  These three have not been not rigorously proved.

\Kot\ conjectured formulas for $\gamma_{1}$ and $\gamma_{2}$ based on the known and heuristically derived formulas (mostly by him) for $u_\pQ(q;n)$ for small $q$.  Theorem~\Phvdiag\ proves his conjectures and also a formula for $\gamma_{3}$, and further proves that $\gamma_{4}$ is constant as a function of $n$, but that the next two coefficients are not.

\begin{cor}[of Theorem~\Phvdiag]\label{C:Qtopcoeffs}
In $u_\pQ(q;n)$ the coefficients $\gamma_i$ for $i\leq4$ are constant as functions of $n$; but $\gamma_5$ has period $2$ if $q\geq3$.  Exact formulas are
\begin{align*}
\gamma_0 &= \frac{1}{q!}, \qquad 
\gamma_1 = -\, \frac{1}{(q-2)!}\bigg\{ \frac{5}{3} \bigg\}
, \\
\gamma_2 &= \frac{1}{2!(q-2)!} \bigg\{ \frac{25}{9} (q-2)_2 + \frac{61}{6} (q-2) + 3 \bigg\} 
, \\
\gamma_3 &= -\, \frac{1}{3!(q-2)!} \bigg\{ \frac{125}{27} (q-2)_4 + \frac{305}{6} (q-2)_3 + \frac{681}{5} (q-2)_2 + 73 (q-2) + 2 \bigg\} .
\end{align*}
\end{cor}

Unlike the case of bishops and semibishops, the period of $u_\pQ(q;n)$ is not simple, although \Kot\ \cite[6th ed., p.\ 31]{ChMath} makes the following remarkable conjecture.

\begin{conj}[\Kot]\label{KotFib}
The counting quasipolynomial for $q$ queens has period $\lcm[F_q]$, the least common multiple of all positive integers up through the $q$-th Fibonacci number $F_q$.
\end{conj}

The observed periods up to $q=7$ (see \cite[6th ed., pp.\ 19, 27--28]{ChMath} for $q=7$) agree with this proposal, and the theory of Section~\fourmove\ lends credence to its veracity.  

\Kot\ conjectures, yet more strongly, the exact form of the denominator of the generating function $\sum_{n\geq0} u_\pQ(q;n) x^n$: it is a product of specific cyclotomic polynomials raised to specific powers; see \cite[third ed., pp.\ 11, 14 {\it et seq.}]{ChMath},  \cite[6th ed., p.\ 22]{ChMath}.
The conjecture implies that, when written in standard Ehrhart form with denominator $(1-x^p)^{2q+1}$, the generating function has many cancellable factors.  This, too, is not predicted by Ehrhart theory; but as it is too systematic and elegant to be accidental, it presents another tantalizing question. \Kot's evidence, indeed, suggests that $u_\pQ(q;n)$ has a recurrence relation of length far less than its period (see Section~\ref{rr}).  A proof of these conjectures seems to call for a new theoretical leap forward.


\subsection{The semiqueen}\label{semiqueen}\

The \emph{semiqueen} $\pQ^{21}$ is the queen without one of its diagonal moves (think of it as having lost the left or right arm in battle).  We gave formulas for $q=2$ and $3$ in Tables III.4.1 and III.4.2.  We also gave formulas for the initial coefficients $\gamma_2$ and $\gamma_3$ in Tables III.3.1 and III.3.2.  For higher values of $q$ we refer to \Kot's heuristic counting formulas for $q\leq 6$ in \cite[6th ed., pp.\ 732--733]{ChMath}.

Conjecture~\semiqueen\ states a conjectural upper bound for the quasipolynomial period of $\lcm[F_{q/2}]$ when $q$ is even and $\lcm[F_{(q+1)/2}-1]$ when $q$ is odd.  Since we do not expect all denominators in $[F_{q/2}]$ or $[F_{(q+1)/2}-1]$ to appear, we do not expect this bound to be tight for large $q$, although it agrees with \Kot's formulas for $q\leq 6$.


\subsection{The trident}\label{sec:trident}\

The \emph{trident} is the partial queen $\pQ^{12}$ that can advance and retreat but cannot move sideways. 
Our counting formulas for $q\leq3$ in Tables III.4.1 and III.4.2 are the same as \Kot's heuristic ones \cite[pp.\ 730--731]{ChMath}, thereby confirming his.   Tables III.3.1 and III.3.2 give the values of $\gamma_2$ and $\gamma_3$. 

Conjecture~\trident\ implies a conjectural upper bound for the period of the counting quasipolynomial for $q$ tridents of $\lcm[2F_{q/2}-1]$ when $q$ is even and $\lcm[F_{(q+3)/2}-1]$ when $q$ is odd.  Again, this should not be considered tight.  However, \Kot's heuristic approach did yield a period of 6 when $q=4$, which is our upper bound.


\subsection{The anassa}\label{anassa}\

The \emph{anassa} (\Kot's ``semi-rook + semi-bishop'') is the partial queen $\pQ^{11}$, with one horizontal or vertical and one diagonal move.  It is the fourth and final piece known to have period 1, i.e., whose counting function $u_{\pQ^{11}}(q;n)$ is a polynomial for all $q$.

\Kot\ noticed in his results that $(n)_q$ is a factor of $u_{\pQ^{11}}(q;n)$ for $q$ up to $8$.  For instance,
\[
u_{\pQ^{11}}(6;n) = \frac{(n)_6}{6!}\bigg(n^6-10n^5+45n^4-\frac{1093}{9}n^3+\frac{634}{3}n^2-\frac{14033}{63}n+\frac{2278}{21}\bigg).
\]
We would like an explanation for this.

\Kot\ also presents a formula for the number of ways to place $n$ nonattacking anassas on an $n\times n$ board: 
\[
u_{\pQ^{11}}(n;n)=\sum_{k=1}^n\binom{n+1}{k}\frac{k!}{2^k}S(n,k).
\]

\section{Preparation for the Nightrider }\label{Nprep}\

\subsection{Ehrhart enumeration}\label{ehrhart}\

For the nightrider we need more of the technique from Parts~I--IV; thus we continue the exposition from Section \ref{essentials}.  The board is the square board $[0,1]^2$. 

The \emph{intersection lattice} $\cL(\cA_\pP)$\label{d:L} is the lattice of all intersections of subsets of $\cA_\pP$, ordered by reverse inclusion.  
The M\"obius function of $\cL(\cA_\pP)$ is denoted by $\mu$\label{d:mu}  and the bottom element is $\hat0=\bbR^{2q}$\label{d:hat0}.  
Each subspace $\cU\in\cL(\cA_\pP)$\label{d:U} is the intersection of hyperplanes involving a set $I$ consisting of $\kappa$\label{d:kappa} of the $q$ pieces.  
The \emph{essential part} of $\cU$ is the subspace $\tcU$\label{d:tU} of $\bbR^{2\kappa}$ that satisfies the same move equations as $\cU$.  
Then $\alpha(\cU;n)$,\label{d:alphaU} defined as the number of integral points in the dilation of $(\cBo)^{\kappa}\cap\tcU$, is independent of $q$ because $\tcU$ depends only on $\kappa$.  By Ehrhart theory $\alpha(\cU;n)$ is a quasipolynomial of degree $2\kappa-\codim\tcU$.  Since $\cU \cong \bbR^{2(q-\kappa)} \times \tcU$, $\codim\tcU=\codim\cU$ and the number of lattice points in $\cU \cap \ocube$ is $n^{2(q-\kappa)}\alpha(\cU;n)$.  
To get $o_\pP(q;n)$ we use M\"obius inversion to combine these numbers:
$$
o_\pP(q;n) = \sum_{\cU \in \cL(\cA_\pP)} \mu(\hat0,\cU) \, n^{2(q-\kappa)}\, \alpha(\cU;n) .
$$
This is justified by the Ehrhart theory of inside-out polytopes because the nonattacking configurations are the integral lattice points in $(n+1)\ocube$ and not in any of the hyperplanes $\cH^{d/c}_{ij}$.  

In Part~II we defined
$
\alpha^{d/c}(n) := \alpha(\cH_{12}^{d/c};n),
\label{d:adc}
$
the number of ordered pairs of positions that attack each other along slope $d/c$ (they may occupy the same position; that is considered attacking).  Similarly,  
$
\beta^{d/c}(n) := \alpha(\cH_{12}^{d/c}\cap\cH_{13}^{d/c};n),
\label{d:bdc}
$
the number of ordered triples that are collinear along slope $d/c$.
Proposition~II.3.1 gives general formulas for $\alpha$ and $\beta$.
We need only two examples:
\begin{equation}
\begin{aligned}
\alpha^{1/2}(n) 
&=\begin{cases}
\frac{5}{12}n^3+\frac{1}{3}n 	& \text{for $n$ even,} \\[3pt]
\frac{5}{12}n^3+\frac{7}{12}n 	& \text{for $n$ odd} 
\end{cases} \\&
= \frac{5}{12}n^3+\frac{11}{24}n - (-1)^n \left\{ \frac{1}{8}n \right\} ,
\\
\beta^{1/2}(n) 
&=\begin{cases}
\frac{3}{16}n^4+\frac{1}{4}n^2	& \text{for $n$ even,} \\[3pt]
\frac{3}{16}n^4+\frac{5}{8}n^2+\frac{3}{16}	& \text{for $n$ odd.} 
\end{cases} \\&
= \frac{3}{16}n^4+\frac{7}{16}n^2+\frac{3}{32} - (-1)^n\left\{ \frac{3}{16}n^2+\frac{3}{32} \right\}.
\end{aligned}
\label{E:attacklines}
\end{equation}

\subsection{Strong Parity Theorem}\label{sec:parity}\

The Parity Theorem (Theorem~II.4.1) tells us that $\alpha(\cU;n)$ is an even or odd function of $n$, depending on the codimension of $\cU$.  What it does not say is how that affects the number of undetermined coefficients in computing $\alpha(\cU;n)$, which is, in particular, the number of values of the function we need to interpolate all the coefficients.  In general, an Ehrhart quasipolynomial of degree $d$ with period $p$ has $pd+1$ coefficients that have to be computed.  (The leading coefficient is the volume of $\cU\cap\cube$ for all constituents.)  The full theorem, then, should be this:

\begin{thm}[Strong Parity Theorem]\label{T:strongparity}
For a subspace $\cU \in \cL(\cA_\pP)$ whose equations involve $\kappa$ pieces, for which $\alpha(\cU;n)$ has period $p$, the number of values of $\alpha(\cU;n)$ that are sufficient to determine all the coefficients in all constituents is $\lceil p(\kappa-\frac12\codim\cU) \rceil+\epsilon$, where $\epsilon=1$ if $\codim\cU$ is even and $0$ if it is odd.
\end{thm}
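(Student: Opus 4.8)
The plan is to recognize the quantity in the theorem as the dimension of an explicit space of quasipolynomials and to compute that dimension by a short trace calculation. Write $d:=2\kappa-\codim\cU$ for the degree of $\alpha(\cU;n)$ (given by Ehrhart theory), and note that $d\equiv\codim\cU\pmod 2$ since $2\kappa$ is even. Let $V_{0}$ be the space of quasipolynomials of period dividing $p$ and degree at most $d$ whose constituents all share the same coefficient of $n^{d}$ (the common leading coefficient, a relative volume in the case of $\alpha(\cU;n)$); then $\dim V_{0}=p(d+1)-(p-1)=pd+1$, exactly the count quoted just before the theorem. The substitution $\sigma\colon f(n)\mapsto f(-n)$ is an involution carrying $V_{0}$ into itself: it preserves period and degree, and it preserves the common-leading-coefficient property because it scales that coefficient by $(-1)^{d}$. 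By the Parity Theorem, $\alpha(\cU;-n)=(-1)^{\codim\cU}\alpha(\cU;n)=(-1)^{d}\alpha(\cU;n)$, so $\alpha(\cU;\cdot)$ lies in the eigenspace $W$ of $\sigma$ on $V_{0}$ for the eigenvalue $(-1)^{d}$. Since a quasipolynomial vanishing on $\bbZ$ is zero, the evaluation functionals separate the points of $W$, so some $\dim W$ of them form a basis of $W^{\ast}$; hence $\dim W$ suitably chosen values of $\alpha(\cU;n)$ determine it, and in general no fewer suffice. The theorem thus reduces to the identity $\dim W=\lceil p(\kappa-\tfrac12\codim\cU)\rceil+\epsilon$.

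To compute the trace $\operatorname{tr}\sigma$ of $\sigma$ on $V_{0}$, use the basis of $V_{0}$ consisting of the quasipolynomials $e_{r,j}$ (for $r\in\bbZ/p\bbZ$ and $0\le j\le d-1$) equal to $n^{j}$ on the residue class of $r$ and to $0$ on the others, together with the quasipolynomial $g$ equal to $n^{d}$ on every class; there are $pd+1$ of these. A direct check gives $\sigma e_{r,j}=(-1)^{j}e_{-r,\,j}$ and $\sigma g=(-1)^{d}g$, so the only diagonal contributions come from $g$ and from the self-paired residues $r\equiv-r\pmod p$, of which there are $s:=\gcd(2,p)$: namely $r=0$, together with $r=p/2$ when $p$ is even. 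Hence $\operatorname{tr}\sigma=(-1)^{d}+s\sum_{j=0}^{d-1}(-1)^{j}$, where $\sum_{j=0}^{d-1}(-1)^{j}$ is $0$ for $d$ even and $1$ for $d$ odd.

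Since $\sigma$ is an involution, $\dim W=\tfrac12\bigl(\dim V_{0}+(-1)^{d}\operatorname{tr}\sigma\bigr)$. Substituting and simplifying (using $(-1)^{d}(-1)^{d}=1$) gives $\dim W=\tfrac12(pd+2)$ when $d$ is even and $\dim W=\tfrac12(pd+2-s)$ when $d$ is odd; that is, $pd/2+1$ for $d$ even, $(pd+1)/2$ for $d$ odd with $p$ odd, and $pd/2$ for $d$ odd with $p$ even. In each case this equals $\lceil pd/2\rceil+\epsilon$ with $\epsilon=1$ precisely when $d$, equivalently $\codim\cU$, is even; and $pd/2=p(\kappa-\tfrac12\codim\cU)$, which is the asserted formula. (The same count is available more pedestrianly by pairing the constituents $f_{r}\leftrightarrow f_{-r}$ directly.) The computation is essentially routine once the setup is fixed; the points that need care are that $\sigma$ genuinely stabilizes $V_{0}$, that the self-paired constituents retain their top-degree coefficient and so are themselves subject to the common-leading-coefficient constraint, and—the most likely spot for an off-by-one—matching the ceiling and the correction term $\epsilon$ against the parities of $d$ and $p$.
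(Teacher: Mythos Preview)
Your proof is correct. Both your argument and the paper's rest on the same input, the Parity Theorem's relation $\alpha_{-i}(-n)=(-1)^{d}\alpha_{i}(n)$, but you package it differently. The paper proceeds by direct bookkeeping: it pairs the constituents $\alpha_i$ with $\alpha_{-i}$, observes that the coefficients of one determine those of the other via $a_{-i,j}=(-1)^{d-j}a_{i,j}$, notes that the self-paired constituents $\alpha_0$ (and $\alpha_{p/2}$ when $p$ is even) are themselves even or odd polynomials, and then adds up the surviving free coefficients case by case. You instead set up the linear space $V_0$ of candidate quasipolynomials, realize the parity relation as an involution $\sigma$ on $V_0$, and compute the dimension of the relevant eigenspace $W$ by the trace formula $\dim W=\tfrac12(\dim V_0+(-1)^d\operatorname{tr}\sigma)$ in a well-chosen basis. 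Your route is cleaner and less error-prone once the setup is in place, and it makes transparent why the answer depends only on the parities of $d$ and $p$; the paper's route is more elementary and shows concretely which coefficients are independent. As you yourself note at the end, the pedestrian pairing argument is exactly what the paper does.
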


\begin{proof}
Let $\alpha(n):=\alpha(\cU;n)$ and $\nu:=\codim\cU$.  Thus, $\alpha$ has degree $d:=2\kappa-\nu$.

Let the constituents of $\alpha$ be $\alpha_0,\alpha_1,\ldots,\alpha_{p-1}$; that means $\alpha(n)=\alpha_{n\mod p}(n)$.  We take subscripts modulo $p$ so that, e.g., $\alpha_{-1}=\alpha_{p-1}$.  Write $\alpha_i(n) = a_d n^d + a_{i,d-1}n^{d-1} + \cdots + a_{i,0} n^0.$  Since 
$\alpha_{-i}(-n) = (-1)^d \alpha_i(n)$ (Corollary~II.4.1),
\begin{align*}
\alpha_{-i}(-n) &= a_d n^d + a_{-i,d-1}n^{d-1} + \cdots + a_{-i,0} n^0 = \\
(-1)^d \alpha_i(n) &= a_d n^d(-1)^0 + a_{i,d-1}n^{d-1}(-1)^1 + \cdots + a_{i,0} n^0(-1)^d.
\end{align*}
Subtracting, 
$
\sum_{j=0}^{d-1} [a_{i,j}(-1)^{d-j} - a_{-i,j}] n^j = 0,
$
which implies that
$
a_{-i,j} = (-1)^{d-j}a_{i,j}
$
for $j<d$.  It follows that only the coefficients for $0 \leq i \leq p/2$ need to be computed.  There are $d\lfloor (p-1)/2 \rfloor$ coefficients with $j<d$ for $0<i<p/2$.  For $i=0$, Corollary~II.4.1 says that $\alpha_0$ is an even or odd polynomial (depending on $d$) and so is $\alpha_{p/2}$ if the period is even.  The number of coefficients to determine, other than $\alpha_d$, is therefore $\lfloor d/2 \rfloor$ for $\alpha_0$ and the same for $\alpha_{p/2}$ if it exists.  Summing these up, there are
$$
\frac{pd}{2} + \begin{cases}
1	&\text{ if $d$ is even},\\
0	&\text{ if $d$ is odd and $p$ is even},\\
\frac12	&\text{ if $pd$ is odd}
\end{cases}
$$
independent coefficients to be computed in $\alpha$. 
\end{proof}

\section{The Nightrider }\label{N}\

The basic move set is $\M_\pN = \{(1,2),(2,1),(1,-2),(2,-1)\}$.  
As always, $u_\pN(1;n) = n^2$. It is easy to see that $u_\pN(2;1) = 0$, $u_\pN(2;2) = 6$, $u_\pN(2;3) = 28$, and not quite so easy to find $u_\pN(2;4) = 96$ by hand.  Many more values of $u_\pN(2;n)$ are in the OEIS (see Table~\ref{Tb:oeis}).  
In Theorem~II.3.1, all $(\hatc,\hatd)=(1,2)$ and the period is 2, so $\barn := (n\mod2)\in\{0,1\}$.  Therefore, 
\begin{align}\label{E:2N}
u_\pN(2;n) 
&= \left\{\frac{n^4}{2}-\frac{5 n^3}{6}+\frac{3 n^2}{2}-\frac{11 n}{12}\right\} + (-1)^n \frac{n}{4}.
\end{align}
This formula was found independently by \Kot.  His \cite[6th ed., p.\ 312]{ChMath} has an enormous formula for three nightriders (undoubtedly correct, though unproved) that is too complicated to reproduce here.  A proof may be accessible using our techniques.

We summarize the known numerical results for nightriders in Table~\ref{Tb:nightriders}.  We calculated the denominator for four nightriders by using Mathematica to find all vertices of the inside-out polytope and then the least common multiple of their denominators.

\begin{table}[htbp]
\begin{center}
\begin{tabular}{|c||c|c|c|c|}
\hline
\vstrut{12pt}   	& Types & Period & Denom  \\
\hline\hline
\vstrut{12pt} $q =$ 1		& 1	& 1 	& 1	 \\
		\hline
\vstrut{12pt} 2		& 4	& 2	& 2	 \\
		\hline
\vstrut{12pt} 3		& 36*	& 60*	& 60	 \\
		\hline
\vstrut{12pt} 4	& ---	& ---	&14559745200	 \\
		\hline
\end{tabular}
\bigskip
\end{center}
\caption{The number of combinatorial types of nonattacking placements of $q$ (unlabelled) nightriders in an $n \times n$ square board; also the period and denominator.  Note that 36 matches Conjecture III.4.4.
{\small 
\newline\hspace*{1em}* is a number derived from an empirical formula in \cite{ChMath}.  
}}
\label{Tb:nightriders}
\end{table}
\newcommand{\cha}[1]{{\bf \color{teal} Chris: #1 }}

From Theorem~II.3.1 we get a generalization.  Define a \emph{partial nightrider} $\pN^k$\label{d:partN} to have any $k$ ($1\leq k\leq4$) of the complete nightrider's moves. Up to symmetry there are five partial nightriders:
\begin{enumerate}[\qquad]
\item The {\em one-move partial nightrider} $\pN^1$ moves along slope $1/2$. \par In Corollary II.6.1 we found formulas for $u_{\pN^1}(q;n)$ with $q\leq4$.  By Theorem IV.3.2, $u_{\pN^1}(q;n)$ has period 2 for every $q\geq2$.
\item The {\em lateral nightrider} $\pN^2_{\textup{lat}}$ moves along slope $\pm1/2$.
\item The {\em inclined nightrider} $\pN^2_{\textup{incl}}$ moves along slopes $1/2$ and $2$.
\item The {\em orthogonal nightrider} $\pN^2_{\textup{orth}}$ moves along the orthogonal slopes $1/2$ and $-2$.
\item The {\em three-move partial nightrider} $\pN^3$ moves along slope $\pm1/2$ and $2$.
\end{enumerate}

\begin{cor}[of Theorem II.3.1]\label{C:2Nk}
Let $\barn:=n\mod2\in\{0,1\}$.  Then
\begin{equation}
\begin{aligned}
u_{\pN^k}(2;n) 
&= \frac{1}{2}n^4 - \frac{5k}{24}n^3 + \frac{k-1}{2}n^2 - \frac{k}{6} n - \frac{k\barn}{8} n   \\[6pt]
&= \left\{ \frac{1}{2}n^4 - \frac{5k}{24}n^3 + \frac{k-1}{2}n^2 - \frac{11k}{48}n \right\} + (-1)^n \frac{3k}{48}n .  
\end{aligned}
\label{E:u2Nk}
\end{equation}
\end{cor}

Note that all three two-move partial nightriders have the same formula.
 
A direct consequence of Theorem~II.5.1 is that we know the second coefficient of the counting quasipolynomial of $\pN^k$: 
\begin{equation}
\gamma_{1} = -\, \dfrac{5k}{24(q-2)!}.
\end{equation}
This formula for $\pN=\pN^4$ was conjectured by \Kot.  Theorem~II.5.1 gives the leading coefficient of every $\gamma_i$ for all partial nightriders.

\begin{thm}\label{T:Ngammaleading}
For a partial nightrider $\pN^k$, the coefficient $q!\gamma_i$ of $n^{2q-i}$ in $o_{\pN^k}(q;n)$ is a polynomial in $q$, periodic in $n$, with leading term 
$$
\bigg[-\frac{5k}{24}\bigg]^i \, \frac{q^{2i}}{i!}.
$$
\end{thm}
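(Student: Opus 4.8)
The plan is to derive the statement as the specialization to the partial nightrider of Theorem~II.5.1, which for a general rider $\pP$ computes the coefficient of the top power of $q$ in each $q!\gamma_i$ from the leading coefficients of the pair-counting polynomials $\alpha^{d/c}(n)$, $(c,d)\in\M$. Polynomiality in $q$ and periodicity in $n$, together with the fact that $\deg_q(q!\gamma_i)=2i$, are already Theorem~I.4.2, so the only thing at stake is the coefficient of $q^{2i}$.

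To recover that coefficient, expand $o_{\pN^k}(q;n)=\sum_{\cU\in\cL(\cA_{\pN^k}^q)}\mu(\hat0,\cU)\,n^{2(q-\kappa(\cU))}\alpha(\cU;n)$, with the term $\cU=\bbR^{2q}$ contributing $n^{2q}$. Since $\deg_n\alpha(\cU;n)=2\kappa(\cU)-\codim\cU$, the contribution of $\cU$ has $n$-degree $2q-\codim\cU$, so the coefficient of $n^{2q-i}$ only sees subspaces of codimension $\le i$, and from one of codimension exactly $i$ it sees $\mu(\hat0,\cU)$ times the leading coefficient of $\alpha(\cU;n)$. The number of subspaces of a fixed combinatorial type is a polynomial in $q$ of degree $\kappa(\cU)$, and a short argument on the graph whose edges are the pairs of pieces appearing in the defining hyperplanes gives $\kappa(\cU)\le 2\codim\cU$, with equality only when $\cU$ is the intersection of $\codim\cU$ single-attack hyperplanes supported on pairwise disjoint pairs of pieces. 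Hence the $q^{2i}$-coefficient of $q!\gamma_i$ comes solely from such disjoint-pairs subspaces of codimension $i$. For $\cU=\bigcap_{\ell=1}^i\cH^{d_\ell/c_\ell}_{a_\ell b_\ell}$ on disjoint pairs, the interval $[\hat0,\cU]$ is Boolean of rank $i$ so $\mu(\hat0,\cU)=(-1)^i$, and $\tcU$ is a product, so $\alpha(\cU;n)=\prod_{\ell}\alpha^{d_\ell/c_\ell}(n)$; writing $\ell_3(m)$ for the leading coefficient of the cubic $\alpha^m(n)$, the leading coefficient of $\alpha(\cU;n)$ is $\prod_\ell\ell_3(m_\ell)$. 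Summing over the $\tfrac{(q)_{2i}}{2^i\,i!}$ matchings of $[q]$ into $i$ disjoint unordered pairs and then freely over the slope of each pair gives total $(-1)^i\tfrac{(q)_{2i}}{2^i\,i!}\big(\sum_{m\in\M}\ell_3(m)\big)^i$, whose leading $q$-term is $(-1)^i\tfrac{q^{2i}}{2^i\,i!}\big(\sum_{m\in\M}\ell_3(m)\big)^i$.

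It remains to specialize. Every basic move of $\pN^k$ is one of $(1,2),(2,1),(1,-2),(2,-1)$, all with $(\hatc,\hatd)=(1,2)$, so by \eqref{E:attacklines} $\alpha^m(n)=\tfrac5{12}n^3+\cdots$ and $\ell_3(m)=\tfrac5{12}$ for each of the $k$ moves; thus $\sum_{m\in\M}\ell_3(m)=\tfrac{5k}{12}$ and the coefficient of $q^{2i}$ in $q!\gamma_i$ equals $(-1)^i\tfrac1{2^i\,i!}\big(\tfrac{5k}{12}\big)^i=\tfrac1{i!}\big(-\tfrac{5k}{24}\big)^i$, which is the asserted leading term (the case $i=1$ being exactly the formula $\gamma_1=-5k/24(q-2)!$ recorded above). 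There is no genuine obstacle once Parts~I and~II are in hand: the real content lives in the degree bound of Theorem~I.4.2 and the product/Boolean-interval structure behind Theorem~II.5.1, and the only nightrider-specific fact that must be checked for this piece is the single equality $\ell_3(m)=5/12$ for all four moves.
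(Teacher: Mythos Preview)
Your proposal is correct and takes essentially the same approach as the paper: the theorem is stated as a direct specialization of Theorem~II.5.1 (with the polynomiality and degree in $q$ supplied by Theorem~I.4.2), and the only piece-specific input is that every basic move of $\pN^k$ has $\alpha^{d/c}(n)=\tfrac{5}{12}n^3+\cdots$. Your additional sketch of why the $q^{2i}$-term comes only from codimension-$i$ intersections on disjoint pairs is a faithful summary of the content of Theorem~II.5.1 itself, so it is more detail than the paper provides here but not a different route.
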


For the next result, computer algebra gave us a general formula for the third coefficient, $\gamma_{2}$.  It and the periods of $\gamma_{3}$ and $\gamma_{4}$ are new.  Both agree with \Kot's data for $\pN$ ($q\leq3$) and $\pN^1$ ($q\leq8$). 

\begin{thm}\label{thm:N}
The third coefficient of the partial nightrider counting quasipolynomial is independent of $n$; it is
$$
\gamma_{2} = \frac{1}{2!(q-2)!} \bigg\{ \bigg( \frac{5k}{24} \bigg)^2 (q-2)_2 + \bigg(\frac{k}{8}+\frac{11\lambda_1}{32}+\frac{53\lambda_2}{144}+\frac{27\lambda_3}{80}\bigg) (q-2) + (k-1) \bigg\}.
$$
The next coefficient, $\gamma_{3}$, is periodic in $n$ with period $2$ and periodic part 
\[
(-1)^n \frac{1}{3!(q-2)!} \bigg\{ \frac{3k}{8} \bigg\}.
\]
The coefficient $\gamma_{4}$ has period $2$ and periodic part 
\[
-(-1)^n \frac{1}{4!(q-2)!} \bigg\{ \frac{5k^2}{16}(q-2)_2+\bigg(\frac{3k}{2}+\frac{9\lambda_2}{4}\bigg)(q-2) \bigg\},
\]
where $\lambda_1$, $\lambda_2$, and $\lambda_3$ depend on the partial nightrider as stated in Table~\ref{Tb:lambda}.
\end{thm}

\begin{table}[hb]
$\begin{array}{l|cccccc}
 & \pN^1 &\pN^2_{\textup{lat}} & \pN^2_{\textup{incl}} & \pN^2_{\textup{orth}}  & \pN^3 & \pN^4 \\[3pt] \hline
\lambda_1 & 0 & 1 & 0 & 0 & 1 & 2 \\ 
\lambda_2 & 0 & 0 & 1 & 0 & 1 & 2 \\ 
\lambda_3 & 0 & 0 & 0 & 1 & 1 & 2 \\
\end{array}$
\bigskip
\caption{The values of the $\lambda_i$.\label{d:lambda}}
\label{Tb:lambda}
\end{table}

These formulas with $k=1$ agree with the ones in Corollary II.6.1, where they were presented as a simple case of general one-move rider formulas.

\begin{proof}
Just as in Theorem~\Phvdiag, we calculate $\gamma_{2}$ by determining the contribution from all subspaces $\cU$ defined by two move equations, each of the form $\cH_{ij}^{d/c}$ for $d/c\in\{1/2,2/1,-1/2,-2/1\}$ and $i,j\in[q]$.  This is done in Lemma~\ref{L:Ncodim2}.  
  
There is no contribution to $\gamma_{2}$ from subspaces of codimension $0$ or $1$.

The coefficient $\gamma_{3}$ may have contributions from subspaces of codimensions $1$ to $3$.  Since the contribution of a subspace of codimension $3$ comes from the leading coefficient, it is independent of $n$.  We did not compute these leading coefficients.  
A subspace of codimension $2$ contributes zero by Theorem~II.4.2, or simply by observing the formula in Equation~(II.2.5).  
Therefore the periodic part of $(q-2)!\gamma_{3}$ arises solely from $\alpha(\cH^{1/2};n)=\alpha^{1/2}(n)$ in Equation~\eqref{E:attacklines}, which provides a periodic contribution of $(-1)^n\frac{1}{8}n^{2q-3}\binom{q}{2}$ from each of the $k$ hyperplanes in $\cA_{\pN^k}$.

The calculations for hyperplanes and subspaces of Type $\cU_{3\mathrm{b}}^2$ in Lemma~\ref{L:Ncodim2} imply that $\gamma_{4}$ is periodic with period $2$ when $k\geq 2$.  That is because a periodic contribution can come only from a subspace of codimension 1, 2, or 3.  Equation~\eqref{E:attacklines} shows that hyperplanes make no contribution to  $\gamma_{4}$.  We can therefore read the periodic contribution of $- (-1)^n\big[ \frac{k}{16}(q)_3+\lambda_2\frac{3}{32}(q)_3+\frac{5k^2}{384}(q)_4 \big]$ from Lemma~\ref{L:Ncodim2}.
\end{proof}

\begin{lem}\label{L:Ncodim2}
The total contribution to $o_{\pN^k}(q;n)$ of all subspaces with codimension $2$ is 
\begin{align*}
&\left\{ \left[ \frac{k-1}{2}(q)_2+\frac{k}{16}(q)_3+\left(\lambda_1\frac{11}{64}+\lambda_2\frac{53}{288}+\lambda_3\frac{27}{160}\right)(q)_3+\frac{25k^2}{1152}(q)_4 \right] n^{2q-2}  \right.
\\&\quad
\left. + \left[ \frac{7k}{48}(q)_3+\left(\lambda_1\frac{k}{4}+\lambda_2\frac{83}{288}+\lambda_3\frac{1}{4}\right)(q)_3+\frac{55k^2}{1152}(q)_4 \right] n^{2q-4}  \right. 
\\&\quad
\left. + \left[ \frac{1}{32}(q)_3+\frac{65}{144}(q)_4 \right] n^{2q-6} \right\} 
\\&
- (-1)^n \left\{ \left[ \frac{k}{16}(q)_3+\lambda_2\frac{3}{32}(q)_3+\frac{5k^2}{384}(q)_4 \right] n^{2q-4} + \left[ \frac{k}{32}(q)_3+\frac{11k^2}{768}(q)_4 \right] n^{2q-6} \right\}
\\&
+ \bigg[\lambda_1\bigg(\frac{51}{256}-\frac{19}{256}\zeta_4^n-\frac{13}{256} \zeta_4^{2 n}-\frac{19}{256} \zeta_4^{3 n}\bigg)
\\&
\ \ \quad+
\lambda_2\bigg(
\frac{527}{1728}
-\frac{1}{8} \zeta_{12}^{3n}
+\frac{2}{27} \zeta_{12}^{4n} 
-\frac{13}{64} \zeta_{12}^{6n}
+\frac{2}{27} \zeta_{12}^{8n}
-\frac{1}{8} \zeta_{12}^{9n}\bigg)
\\&
\ \ \quad
+\lambda_3\bigg(\frac{599}{1600}
-\frac{4}{25} \zeta_{20}^{4 n}
+\frac{1}{8}  \zeta_{20}^{5 n}
-\frac{4}{25} \zeta_{20}^{8 n} 
+\frac{1}{64} \zeta_{20}^{10 n}
-\frac{4}{25} \zeta_{20}^{12 n} 
+\frac{1}{8}  \zeta_{20}^{15 n}
-\frac{4}{25} \zeta_{20}^{16 n}\bigg)\bigg] (q)_3\,n^{2q-6},
\end{align*}
\label{E:Ncodim2}
where $\lambda_1$, $\lambda_2$, and $\lambda_3$ are as in Table~\ref{Tb:lambda}.
\end{lem}

The symmetry in the coefficients of powers of each $\zeta_r$ in the last $n^{2q-6}$-term is due to the coefficients' being real numbers.

\begin{proof} 
There are four types of subspace, of which only Type $\cU_{3\mathrm{b}}^2$ involves calculations that are substantially different from those in Lemma~\Lcodimzot.  That is where the $\lambda_i$ arise.

From Section~\ref{Nprep}, for a subspace determined by equations involving $\kappa$ pieces, $\alpha(\cU;n)$ is a quasipolynomial of degree $2\kappa-\codim\cU$.

\medskip
\begin{description}
\item[{\bf Type $\cU_2^2$}\,] The subspace $\cU$ is defined by two move equations involving the same two pieces.  There is one such subspace for each of the $\binom{q}{2}$ unordered pairs of pieces and $n^2$ ways to place the attacking pieces in $\cU$.  Since $\cU$ lies in $k$ hyperplanes, the M\"obius function is $\mu(\hat0,\cU)  = k-1$. The contribution to $o_{\pN^k}(q;n)$ is $\frac{k-1}{2} (q)_2 n^{2q-2}$ so the contribution to $q!\gamma_{2}$ is $\frac{k-1}{2}(q)_2$.

The one-move partial nightrider is an exception, since there is no subspace of this type when $k=1$.  Then instead of multiplying the contribution by $\mu(\hat0,\cU)$, we multiply it by $0=k-1$; i.e., the same multiplier expression.

\medskip
\item[{\bf Type $\cU_{3\mathrm{a}}^2$}\,]  The subspace $\cU$ is defined by two move equations of the same slope involving three pieces. There is one subspace of this type for each of the $k$ slopes.  The number of points in each subspace is $\beta^{1/2}(n)$ from Equation~\eqref{E:attacklines}.  There are $(q)_3/3!$ ways to choose three partial nightriders, and the M\"obius function is $2$.  Thus we multiply $\beta^{1/2}(n)$ by $\frac{2k}{3!}(q)_3n^{2q-6}$ to find that the contribution to $o_{\pN^k}(q;n)$ is 
$$
(q)_3 \left\{ \frac{k}{16}n^{2q-2} + \frac{7k}{48}n^{2q-4} + \frac{k}{32}n^{2q-6} - (-1)^n \Big[ \frac{k}{16}n^{2q-4} + \frac{k}{32}n^{2q-6} \Big] \right\} ,
$$
so that the contribution to $q!\gamma_{2}$ is $\frac{k}{16}(q)_3$ and that to $q!\gamma_{4}$ is $\big[\frac{7k}{48} - (-1)^n\frac{k}{16}\big](q)_3$.

\medskip
\item[{\bf Type $\cU_{3\mathrm{b}}^2$}\,]  
The subspace $\cU$ is defined by two move equations of different slopes involving three pieces, say $\cU=\cH^{d/c}_{12} \cap \cH^{d'/c'}_{23}$.  By symmetry it suffices to find the contributions when $d/c=1/2$ and $d'/c'\in\{2/1,-2/1,-1/2\}$.  The total contribution will depend on the piece's basic move set.  We write $\cU=\cU^{d'/c'}$ when we need to mention the slope. The M\"obius function is $\mu(\hat0,\cU)  = 1$.

We choose the piece $\pP_2$ in $q$ ways, $\pP_1$ in $q-1$ ways, and $\pP_3$ in $q-2$ ways.

For each value of $d'/c'$ we calculated the denominators of all vertex coordinates of $\cube \cap \cU$ using Mathematica.  That gave us the denominator $D(\cube \cap \cU)$ and hence an upper bound on the period of $\alpha(\cU;n)$ in each case.  
Using Mathematica again we found quasipolynomial formulas for the number of placements of the three nightriders, $\alpha(\cU;n)$.  These formulas were calculated by varying the position of $\pP_2$ in the $n\times n$ grid as $n$ varied in a residue class modulo $D(\cube \cap \cU)$.  The calculations were carried out for $n=1,\ldots,100$, which covers at least five periods in every case.  By Theorem~\ref{T:strongparity} and the fact that $\alpha(\cU;n)$ has degree $4 = 2\cdot3-\codim\cU$, there are $2p+1$ coefficients to determine in $\alpha(\cU;n)$; as the periods are bounded by 20 in every case, that is enough data to infer them all with redundancy.  
(We found that the period of every one of these quasipolynomials equals the denominator of the subspace, which tends to support Conjecture~\ref{Cj:p=D}.)

{\em Case $d'/c'=-1/2$.} The vertex denominators here are $2$ and $4$ so $D(\cube \cap \cU)=4$.   The number of placements is
\[
\alpha(\cU^{-1/2};n) = 
\begin{cases}
\frac{11}{64}n^4+\frac{1}{4}n^2 & \text{for $n\equiv 0\mod4$,} \\[3pt]
\frac{11}{64}n^4+\frac{1}{4}n^2+\frac{1}{4} & \text{for $n\equiv 2\mod4$,} \\[3pt]
\frac{11}{64}n^4+\frac{1}{4}n^2+\frac{19}{64} & \text{for $n$ odd.} 
\end{cases}
\]

{\em Case $d'/c'=2/1$.}  The vertex denominators here are $2,3,4$ so $D(\cube \cap \cU) = \lcm(2,3,4) = 12$.   The number of placements is
\[
\alpha(\cU^{1/2};n) = 
\begin{cases}
\frac{53}{288}n^4+\frac{7}{36}n^2 & \text{for $n\equiv 0\mod 12$,} \\[3pt]
\frac{53}{288}n^4+\frac{7}{36}n^2-\frac{2}{9} & \text{for $n\equiv \pm4\mod 12$,} \\[3pt]
\frac{53}{288}n^4+\frac{7}{36}n^2+\frac{1}{2} & \text{for $n\equiv 6\mod 12$,} \\[3pt]
\frac{53}{288}n^4+\frac{7}{36}n^2+\frac{5}{18} & \text{for $n\equiv \pm2\mod 12$,} \\[3pt]
\frac{53}{288}n^4+\frac{55}{144}n^2+\frac{21}{32} & \text{for $n\equiv 3\mod 6$,} \\[3pt]
\frac{53}{288}n^4+\frac{55}{144}n^2+\frac{125}{288} & \text{for $n\equiv \pm1\mod 6$.} 
\end{cases}
\]
Note that the coefficient of $n^2$, which becomes a contribution to $\gamma_{4}$, has period $2$.

{\em Case $d'/c'=-2/1$.}  The vertex coordinate denominators here are $2$, $4$, and $5$ so $D(\cube \cap \cU) = 20$.  The number of placements is
\[\alpha(\cU^{-2/1};n) = 
\begin{cases}
\frac{27}{160}n^4+\frac{1}{4}n^2 & \text{for $n\equiv 0\mod 20$,} \\[3pt]
\frac{27}{160}n^4+\frac{1}{4}n^2+\frac{4}{5} & \text{for $n\equiv \pm4,\pm8 \mod 20$,} \\[3pt]
\frac{27}{160}n^4+\frac{1}{4}n^2-\frac{1}{2} & \text{for $n\equiv 10\mod 20$,} \\[3pt]
\frac{27}{160}n^4+\frac{1}{4}n^2+\frac{3}{10} & \text{for $n\equiv \pm2, \pm6 \mod 20$,} \\[3pt]
\frac{27}{160}n^4+\frac{1}{4}n^2-\frac{9}{32} & \text{for $n\equiv \pm5\mod 20$,} \\[3pt]
\frac{27}{160}n^4+\frac{1}{4}n^2+\frac{83}{160} & \text{for odd $n\not\equiv \pm5\mod 20$.} 
\end{cases}\]

For the complete nightrider, the $\binom{4}{2}$ choices of pairs of slopes consist of two copies of each case.  For the three-move partial nightrider, the three choices of pairs of slopes consist of one copy of each case.  And for the two-move partial nightriders, there is only the one case corresponding to its pair of slopes.  These conditions are encoded by the coefficients $\lambda_i$.  Therefore the contribution to $o_{\pN^k}(q;n)$ is $(q)_3$ times 
\begin{align*}
&\quad \ 
\lambda_1 \bigg\{\frac{11}{64} n^{2q-2}+ \frac{1}{4}n^{2q-4}+ \bigg(
\frac{51}{256}
-\frac{19}{256}\zeta_4^n
-\frac{13}{256} \zeta_4^{2 n}
-\frac{19}{256} \zeta_4^{3 n}
\bigg)n^{2q-6} \bigg\}
\\&
+\lambda_2\bigg\{\frac{53}{288} n^{2q-2}+ \bigg(\frac{83}{288}-(-1)^n\frac{3}{32} \bigg)n^{2q-4}
\\& \qquad \quad
+ \bigg(
\frac{527}{1728}
-\frac{1}{8} \zeta_{12}^{3n}
+\frac{2}{27} \zeta_{12}^{4n} 
-\frac{13}{64} \zeta_{12}^{6n}
+\frac{2}{27} \zeta_{12}^{8n}
-\frac{1}{8} \zeta_{12}^{9n}
\bigg)n^{2q-6} \bigg\} 
\\&
+\lambda_3 \bigg\{\frac{27}{160} n^{2q-2}+ \frac{1}{4}n^{2q-4}
\\&\qquad\quad 
+ \bigg(
\frac{599}{1600}
-\frac{4}{25} \zeta_{20}^{4 n}
+\frac{1}{8}  \zeta_{20}^{5 n}
-\frac{4}{25} \zeta_{20}^{8 n} 
+\frac{1}{64} \zeta_{20}^{10 n}
-\frac{4}{25} \zeta_{20}^{12 n} 
+\frac{1}{8}  \zeta_{20}^{15 n}
-\frac{4}{25} \zeta_{20}^{16 n}
\bigg)n^{2q-6} \bigg\}
\\\end{align*}

\medskip
\item[{\bf Type $\cU_{4^*}^2{:}\cU_2^1\cU_2^1$\,}] The subspace $\cU$ is defined by two move equations involving four distinct pieces.
For every pair of hyperplanes, the number of attacking configurations is $\big(\alpha^{1/2}(n)\big)^2$, whose value is given in Equation~\eqref{E:attacklines}.  
We must also multiply by the number of ways in which we can choose this pair of hyperplanes, which is $k\frac{(q)_4}{8}+\binom{k}{2}\frac{(q)_4}{4}=\frac{k^2}{8}(q)_4$.  The M\"obius function is $1$.  We conclude that the contribution to $o_\pN(q;n)$ is 
\[
(q)_4 \left\{ \frac{25k^2}{1152}n^{2q-2} + \frac{55k^2}{1152}n^{2q-4} + \frac{65k^2}{2304}n^{2q-6} 
- (-1)^n\bigg[\frac{5k^2}{384} n^{2q-4} + \frac{11k^2}{768} n^{2q-6}\bigg] \right\},
\]  
that to $\gamma_{2}$ is $\frac{25k^2}{1152}(q)_4$, and that to $\gamma_{4}$ is $\big( \frac{55k^2}{1152}-(-1)^n\frac{5k^2}{384} \big)(q)_4$.
\end{description}
\end{proof}

Curiously, not only is the quasipolynomial for every subspace as a whole an even function, so is each constituent; equivalently, opposite constituents $\alpha_i(\cU;n)$ and $\alpha_{-i}(\cU;n)$ are equal, for every $i$.  We do not know why.

Type $\cU_{3\mathrm{b}}^2$ contributes period  $60 = \lcm(12,20,4)$ to $\gamma_{6}$ for the complete nightrider and the three-move partial nightrider, as one can see from Lemma~\ref{L:Ncodim2}.  We therefore expect $\gamma_{6}$ to have period that is a multiple of 60 for those pieces; however, we are far from proving this.

This computational method can be applied to larger numbers of any piece, limited only by human effort and computing power.  It should be feasible to deduce, at the least, partial nightrider formulas for $\gamma_{3}$, $\gamma_{4}$, and $u_{\pN^k}(3;n)$.


\section{Conclusions, Conjectures, Extensions}\label{last}

Work on nonattacking chess placements raises many questions, some of which have general interest.


\subsection{Simplified riders }\label{simplified}\

We cannot reach satisfactorily strong conclusions about the queen and nightrider in part because their periods grow too rapidly as $q$ increases, which we now understand by way of the twisted Fibonacci spirals in Section~\fourmove.  It would be desirable to study simplified analogs, hoping not only for hints to solve those pieces but to find general patterns in the period and coefficients.  As having four move directions is complicated, we propose handicapping the pieces by eliminating some of their moves.

As we saw in Part~III, partial queens $\pQ^{hk}$ are approachable because the queen's moves are individually simple.  We suggest further study of the following variants, some of which have been investigated by \Kot. 

\begin{enumerate}[(a)]
\item A generalization of the anassa is a rider with two moves, $(1,0)$ and $(c,d)$.  The denominator of this piece was determined in Proposition~\denomtwomove.  This piece, and especially its period, would facilitate analysis of the effect of non-unit slopes.
\end{enumerate}
The nightrider's main complication comes from the non-unit slopes.  We propose as worthy subjects the partial nightriders with only two moves, from Section~\ref{N}:
\begin{enumerate}[(a)]
\item[(b)] The lateral nightrider.  We conjecture a period of $4$ for $q\geq3$.  We verified this for $q=3,4$.
\item[(c)] The inclined nightrider.  The period for $q=3$ is 12.
\item[(d)] The orthogonal nightrider.  The period for $q=3$ is 20.
\end{enumerate}
We thank Arvind Mahankali for calculating these periods, using Mathematica.  Note that the coefficient of $n^{2q-6}$ in Lemma~\ref{L:Ncodim2} strongly suggests but does not prove these periods.

After we proposed these pieces, Hanusa and Mahankali \cite{Arvind} found their denominators, which we reproduce in Table~\ref{Tb:N2D}.  As upper bounds on periods, they also suggest but do not prove the true periods.  
\begin{table}[hb]
\begin{tabular}{l|cccc}
\quad\emph{Name}	&$q=1$	&$q=2$	&$q=3$	&$q\geq4$	\\[1pt]
\hline
Lateral	&1	&2	&4	&4	\\[1pt]
Inclined	&1	&2	&$12=3\cdot2^{q-1}$	&$3\cdot2^{q-1}$	\\ [1pt]
Orthogonal	&1	&2	&$20=5\cdot2^{q-1}$	&$15\cdot 2^{q-1}$	\\[1pt]
\end{tabular}
\medskip
\caption{The denominators of the two-move partial nightriders, from \cite[Section 6]{Arvind}.}
\label{Tb:N2D}
\end{table}

\begin{enumerate}[(a)]
\item[(e)]  A simple three-move rider would have moves $(1,0), (0,1), (c,d)$.  This should be investigated.
\item[(f)]  The partial nightrider $\pN^3$.  We discussed it briefly in Section~\ref{N}, finding the counting formula and the period $2$ for $q=2$.  The period for three pieces appears to be $60$.  This period is the same as for the complete nightrider but we expect $\pN^3$ to have a smaller period than $\pN$ when $q\geq4$.
\end{enumerate}
%


\subsection{Counting nonattacking combinatorial types}\label{configs}\

It would be valuable to produce a conjectural expression for the number of combinatorial types of nonattacking configuration for the queen, a partial queen with three moves, or any other piece with more than two moves (one or two moves being easy; see Proposition~I.5.6).


\subsection{The problem of the recurrence relation}\label{rr}\

Suppose we generate a sequence of numbers, say $u_1,u_2,\ldots$, one at a time.  Each time we generate a new $u_n$ we look for a linear, homogeneous recurrence relation with constant coefficients that $u_1,u_2,\ldots,u_n$ satisfy.  The recurrence may change at each $n$, but suppose there is an $N$ such that the $(N+k)$-th recurrence is identical to the $N$-th recurrence for $k = 1,2,\ldots, K$.  Let us call this a \emph{stable recurrence} of strength $K$.  If the stable recurrence is valid for all $n>N$, it is the true recurrence satisfied by the sequence.

\Kot's method of finding formulas for a rider $\pP$ and a number $q$ was to compute $u_n=u_\pP(q;n)$ until he detected a stable recurrence of sufficient strength.  Then he assumed the stable recurrence was the true recurrence and used it to obtain a rational generating function for $u_\pP(q;n)$, from which he could obtain a quasipolynomial formula.  (Personal communication, 19 July 2018.)
His stable relation is the true relation in all cases that we could verify.  The first question is: Why?

We know there is a recurrence for $u_\pP(q;n)$, because that is implied by the quasipolynomial formula.  What we do not know is whether there is an $N$ at which a stable recurrence appears which is not the true recurrence.  That means the $N$-th through $(N+K)$-th recurrences are the same, for some positive $K$, but the $(N+K+1)$-st recurrence is not.  We conjecture that this cannot happen for (a) some knowable value of $K$; (b) $K=1$.

\begin{conj}\label{Cj:recurrence}
(a) For any rider $\pP$ and fixed $q>0$, the sequence $u_\pP(q;n)$ satisfies a recurrence relation which is the first stable relation of sufficient strength found as the numbers $u_\pP(q;n)$ are generated.

(b) The first repeated relation is the true relation.
\end{conj}

As we mentioned in Section~\ref{queen}, the recurrences found by \Kot\ are much shorter than the quasipolynomial period.  That cannot be an accident!  It leads to the second important problem about recurrences.

\begin{conj}
All sequences $\{u_\pP(q;n)\}_n$ satisfy recurrences that are much shorter than the period of the quasipolynomial.  
\end{conj}

Supposing they do, we ask how much shorter, and why?  It is due to factors cancelling in the Ehrhart rational generating function, whose denominator is $(1-t^p)^{2q+1}$.  Which factors cancel, and why do they exist in the numerator?

A simple example: The Ehrhart denominator for $q$ bishops is $(1-t^2)^{2q+1}$; but \Kot\ finds that it cancels down to $(1-t^2)^{2q-5} (1+t)^{6}$ \cite[p.\ 239]{ChMath}.


\section*{Dictionary of Notation}

\begin{enumerate}[]
\item $(c,d)$ -- coordinates of move vector (p.\ \pageref{d:mr})
\item $d/c$ -- slope of line or move (p.\ \pageref{d:slope-hyp})
\item $F_q$ -- Fibonacci numbers (p.\ \pageref{d:Fib})
\item $h$ -- \# of horizontal, vertical moves of partial queen (p.\ \pageref{d:partQ})
\item $k$ -- \# of diagonal moves of partial queen (p.\ \pageref{d:partQ})
\item $k$ -- \# of moves of partial nightrider (p.\ \pageref{d:partN})
\item $m = (c,d)$ -- basic move (p.\ \pageref{d:mr})
\item $m^\perp = (d,-c)$ -- orthogonal vector to move $m$ (p.\ \pageref{d:mrperp})
\item $n$  -- size of square board (p.\ \pageref{d:n})
\item $o_\pP(q;n)$ -- \# of nonattacking labelled configurations (p.\ \pageref{d:distattacks})
\item $p$ -- period of quasipolynomial (p.\ \pageref{d:p})
\item $q$ -- \# of pieces on a board (p.\ \pageref{d:q})
\item $s(n,k)$ -- Stirling number of the first kind (p.\ \pageref{d:s})
\item $S(n,k)$ -- Stirling number of the second kind (p.\ \pageref{d:S})
\item $u_\pP(q;n)$ -- \# of nonattacking unlabelled configurations (p.\ \pageref{d:indistattacks})
\item $z=(x,y)$, $z_i=(x_i,y_i)$ -- piece position (p.\ \pageref{d:config})
\item $\bz=(z_1,\ldots,z_q)$ -- configuration (p.\ \pageref{d:config})
\end{enumerate}
\smallskip
\begin{enumerate}[]
\item $\alpha(\cU;n)$ -- \# of attacking configurations in essential part of subspace $\cU$ (p.\ \pageref{d:alphaU})
\item $\alpha^{d/c}(n)$ -- \# of 2-piece attacks on slope $d/c$ (p.\ \pageref{d:adc})
\item $\beta^{d/c}(n)$ -- \# of 3-piece attacks on slope $d/c$ (p.\ \pageref{d:bdc})
\item $\gamma_{i}$ -- coefficient of $n^{2q-i}$ in $u_\pP$ (p.\ \pageref{d:gamma})
\item $\zeta_r=e^{2\pi i/r}$ -- primitive $r$-th root of unity (p.\ \pageref{d:zetar})
\item $\kappa$ -- \# of of pieces in equations of $\cU$ (p.\ \pageref{d:kappa})
\item $\lambda_1,\lambda_3,\lambda_2$ -- coefficients for partial queens $\pN^k$ (p.\ \pageref{d:lambda})
\item $\mu$ -- M\"obius function of intersection lattice (p.\ \pageref{d:mu}) 
\end{enumerate}
\smallskip
\begin{enumerate}[]
\item $D$ -- denominator of inside-out polytope (p.\ \pageref{d:D}) 
\end{enumerate}
\smallskip
\begin{enumerate}[]
\item $\M$ -- set of basic moves (p.\ \pageref{d:moveset})
\end{enumerate}
\smallskip
\begin{enumerate}[]
\item $\cA_\pP$ -- move arrangement of piece $\pP$ (p.\ \pageref{d:AP})
\item $\cB$ -- closed board: usually the square $[0,1]^2$ (p.\ \pageref{d:cB})
\item $\cH_{ij}^{d/c}$ -- hyperplane for move $(c,d)$ (p.\ \pageref{d:slope-hyp})
\item $\cL(\cA_\pP)$ -- intersection lattice  (p.\ \pageref{d:L})
\item $\cube, \ocube$ -- closed, open unit hypercube (p.\ \pageref{d:cP})
\item $(\cube,\cA_\pP)$ -- inside-out polytope (p.\ \pageref{d:cP})
\item $\cT$ -- triangular board, $0 \leq x \leq y \leq 1$ (p.\ \pageref{d:cT}
\item $\cU$ -- subspace in intersection lattice (p.\ \pageref{d:U}) 
\item $\tcU$ -- essential part of subspace $\cU$ (p.\ \pageref{d:tU}) 
\end{enumerate}
\smallskip
\begin{enumerate}[]
\item $\hat0= \bbR^{2q}$ -- bottom element of intersection lattice (p.\ \pageref{d:hat0})
\end{enumerate}
\smallskip
\begin{enumerate}[]
\item $\bbR$ -- real numbers
\item $\bbR^{2q}$ -- configuration space (p.\ \pageref{d:configsp}) 
\item $\bbZ$ -- integers
\end{enumerate}
\smallskip
\begin{enumerate}[]
\item $\pB$ -- bishop (p.\ \pageref{B})
\item $\pN$ -- nightrider (p.\ \pageref{N})
\item $\pN^k$ -- partial nightrider (p.\ \pageref{d:partN})
\item $\pP$ -- piece (p.\ \pageref{d:P})
\item $\pQ$ -- queen (p.\ \pageref{Q})
\item $\pQ^{hk}$ -- partial queen (p.\ \pageref{d:partQ})
\item $\pR$ -- rook (p.\ \pageref{R})
\end{enumerate}

\newpage
\newcommand\otopu{\r{u}}

\end{document}